\newcommand{\R}{\mathbb{R}}
\newcommand{\C}{\mathbb{C}}
\newcommand{\N}{\mathbb{N}}
\newcommand{\id}{\operatorname{id}}
\newcommand{\cl}{\overline}
\newcommand{\inter}[1]{{#1}^\circ}
\newcommand{\orbit}{\mathcal{O}}
\newcommand{\Fix}{\operatorname{Fix}}
\newcommand{\diam}{\operatorname{diam}}
\newcommand{\bs}{\setminus}
\theoremstyle{plain}
\newtheorem{theorem}{Theorem}[section]
\newtheorem{lemma}[theorem]{Lemma}
\newtheorem{corollary}[theorem]{Corollary}
\newtheorem{proposition}[theorem]{Proposition}
\theoremstyle{remark}
\newtheorem{remark}[theorem]{Remark}
\theoremstyle{definition}
\numberwithin{equation}{section}
\begin{document}
\title[Fixed points of Thompson metric nonexpansive maps]{Bounded fixed point sets and Krasnoselskii iterates of Thompson metric nonexpansive maps}
\author[B. Lins]{Brian Lins}
\date{}
\address{Brian Lins, Hampden-Sydney College}
\email{blins@hsc.edu}
\subjclass[2020]{Primary 47H07, 47H09, 47J26; Secondary 46T20, 47H08, 47H10}
\keywords{Nonlinear Perron-Frobenius theory, normal cones, fixed points, Thompson's metric, nonexpansive maps, Krasnoselskii iteration, measures of noncompactness, Collatz-Wielandt numbers, order-preserving subhomogeneous functions, real analytic functions, algebraic Riccati equations}

\begin{abstract}
We consider maps defined on the interior of a normal, closed cone in a real Banach space that are nonexpansive with respect to Thompson's metric. With mild compactness assumptions, we prove that the Krasnoselskii iterates of such maps converge to a fixed point when one exists. For maps that are also order-preserving, we give simple necessary and sufficient conditions in terms of upper and lower Collatz-Wielandt numbers for the fixed point set to be nonempty and bounded in Thompson's metric. When the map is also real analytic, these conditions are both necessary and sufficient for the map to have a unique fixed point and for all iterates of the map to converge to the fixed point. We demonstrate how these results apply to certain nonlinear matrix equations on the cone of positive definite Hermitian matrices.
\end{abstract}

\maketitle
 
\section{Introduction}

%

Let $C$ be a normal, closed cone with nonempty interior $\inter{C}$ in a Banach space. Thompson's \cite{Thompson63} metric $d_T$ is a complete metric on $\inter{C}$. A map $f: \inter{C} \rightarrow \inter{C}$ is \emph{nonexpansive} with respect to Thompson's metric if $d_T(f(x),f(y)) \le d_T(x,y)$ for all $x, y \in \inter{C}$. Thompson metric nonexpansive maps arise in a variety of applications involving cones such as the standard cone $\R^n_{\ge 0}$ of nonnegative vectors in $\R^n$ \cite{CaLiSt19,GaSt20,Gnacadja07}, cones of positive semidefinite matrices \cite{DuWaLi13, LaLi12, Lim09, WeSr22}, cones of nonnegative functions in various function spaces \cite{Krasnoselskii64,Thompson63}, and cones of positive operators \cite{LaLi08}.  

A standard method to find a fixed point of a nonexpansive map $f$ is to iterate a linear relaxation of $f$: $\alpha f + (1- \alpha) \id$ where $0 < \alpha < 1$ and $\id$ is the identity map.  
Krasnoselskii showed \cite{Krasnoselskii55} that if a compact nonexpansive map has a fixed point in a uniformly convex Banach space, then the iterates of $\tfrac{1}{2}(f + \id)$ converge to a fixed point. Schaefer \cite{Schaefer57} extended this result to iterates of $\alpha f + (1-\alpha) \id$ for any $0< \alpha < 1$.  Later, Edelstein and O'Brien \cite{EdOB78} and Ishikawa \cite{Ishikawa76} independently proved that Krasnoselskii iteration does not require uniform convexity and in fact can be applied to nonexpansive maps on any Banach space, after making suitable compactness assumptions. 
Ishikawa's results also apply to more general Krasnoselskii-Mann iterations where the constants $\alpha$ are allowed to vary. 

Kirk \cite{Kirk82} showed that a natural generalization of Krasnoselskii iteration converges to fixed points for nonexpansive maps on a large class of hyperbolic spaces which includes normed spaces and CAT(0) spaces. However, general Thompson geometries are not hyperbolic spaces under the definition used in \cite{Kirk82}, nor do they fit into other classes of hyperbolic spaces where Krasnoselskii-Mann iteration has been studied \cite{Leustean10,ReSh90}.  

Iterative methods to find fixed points of Thompson's metric nonexpansive maps typically require extra assumptions so that the maps are contractions \cite{KrNu93}, or involve some kind of nonlinear relaxation chosen for a specific cone \cite[Theorem 11]{GaSt20}, \cite{Herzog14}.  In Section \ref{sec:Kras}, we show that if a Thompson metric nonexpansive map $f$ has a fixed point in $\inter{C}$, then under mild compactness assumptions which are always satisfied in finite dimensions, the iterates of the linear relaxation $\alpha f+ (1-\alpha) \id$ converge to a fixed point of $f$ for any $0 < \alpha < 1$ and any starting point in $\inter{C}$. The compactness assumptions are stated in terms of Kuratowski's measure of noncompactness, which can be defined using either the norm or Thompson's metric. The two metrics yield different versions of Kuratowski's measure of noncompactness, each with different properties. The properties of the norm version, which we denote by $\gamma$, are well-known (see e.g., \cite{Deimling}), and the properties of the version $\tau$ defined in terms of Thompson metric were investigated in \cite{HeKu14}. We extend these properties by proving in Theorem \ref{thm:condensing-nonexpansive} that the sum of a $\tau$-condensing map with a $d_T$-nonexpansive map is $\tau$-condensing. 

In Section \ref{sec:Order}, we consider Thompson metric nonexpansive maps which are also order-preserving with respect to the partial order induced by the cone. Much of what is known about the fixed point theory for such maps is due to the work of Krasnoselskii and his students Ladyzhenskii and Bahktin (see \cite[Chapter 6]{Krasnoselskii64} and the bibliographic notes there for details), and to Thompson \cite{Thompson63}. Some additional recent results can be found in \cite{CaLiSt19,MaMo19}. 

In Theorem \ref{thm:bounded}, we show that a simple sufficient condition for the existence of fixed points, involving upper and lower Collatz-Wielandt numbers, is also necessary for the fixed point set to be bounded in Thompson's metric. Although the sufficient condition for the existence of fixed points in Theorem \ref{thm:bounded} appears to be new, the main novelty of Theorem \ref{thm:bounded} is the proof that these conditions are necessary for the fixed point set to be bounded.  

In Section \ref{sec:Analytic} we focus on Thompson's metric nonexpansive maps which are also real analytic. We show in Theorem \ref{thm:analytic2} that the conditions of Theorem \ref{thm:bounded} are necessary and sufficient for an order-preserving, real analytic, $d_T$-nonexpansive map $f$ to have a unique fixed point in $\inter{C}$, and in that case the iterates $f^k(x)$ converge to the fixed point for every initial point $x \in \inter{C}$. We also prove in Theorem \ref{thm:analytic1} that if $C$ is a finite dimensional closed cone and $f:\inter{C} \rightarrow \inter{C}$ is real analytic and $d_T$-nonexpansive map (but not necessarily order-preserving), then $f$ has a nonempty and bounded set of fixed points in $\inter{C}$ if and only if $f$ has a unique fixed point.  


Although most of the results in this paper are stated for infinite dimensional Thompson geometries, the main results are new and noteworthy even in finite dimensions. 
For an example, we demonstrate in Section \ref{sec:matrix} how the results of Sections \ref{sec:Order} and \ref{sec:Analytic} can be applied to find solutions of a class of nonlinear matrix equations on the cone of positive definite matrices.

\section{Preliminaries} \label{sec:Prelim}

In what follows, we use $\inter{A}, \cl{A}$, and $\partial A$ to denote respectively the interior, closure, and boundary of a set $A$. We let $\Fix(f)$ denote the set of fixed points of a function $f$. 

\subsection{Cones and Thompson's metric}
Let $X$ be a real Banach space with norm $\| \cdot \|$ and dual space $X^*$.  A \emph{closed cone} is a closed convex set $C \subset X$ such that (i) $\lambda C \subseteq C $ for all $\lambda \ge 0$ and (ii) $C \cap (-C) = \{0\}$. A closed cone $C$ induces the following partial order on $X$.  We say that $x \le y$ whenever $y - x \in C$. We will write $x \ll y$ when $y-x \in \inter{C}$. For $x,y \in C$, we let 
$$M(x/y) = \inf \{ \beta > 0 : x \le \beta y \}$$
and
$$m(x/y) = \sup \{ \alpha > 0 : \alpha y \le x \}.$$
An alternative formula for $M(x/y)$ when $y \in \inter{C}$ (see e.g., \cite[Lemma 2.2]{LLNW18}) is:
\begin{equation} \label{functionals}
M(x/y) = \sup_{\phi \in C^*} \frac{\phi(x)}{\phi(y)}
\end{equation}
where $C^* = \{\phi \in X^* : \phi(x) \ge 0 \text{ for all } x \in C \}$ is the \emph{dual cone} of $C$. 

Two elements $x, y \in X$ are \emph{comparable}, denoted $x \sim y$, if there are constants $\alpha, \beta > 0$ such that $\alpha x \le y \le \beta x$. Comparability is an equivalence relation on $C$, and the equivalence classes are called the \emph{parts} of $C$. If $C$ has nonempty interior, then $\inter{C}$ is a part. For comparable $x, y \in C$, \emph{Thompson's metric} is 
$$d_T(x,y) = \log \left( \max \{ M(x/y), M(y/x)\} \right) = \log \inf \{ \beta \ge 1 : \beta^{-1} x \le y \le \beta x \}.$$
We use $B_R(x) = \{ y \in C : y \sim x,~ d_T(x,y) < R \}$ to denote the open balls in Thompson's metric. For any $x, y \in X$, we let $[x,y]$ denote the \emph{order interval} 
$$[x,y] = \{z \in X : x \le z \le y \}.$$ 
Observe that for any $x \in C$, 
\begin{equation} \label{balls}
\cl{B_R(x)} = [e^{-R}x, e^R x].
\end{equation}

A closed cone $C$ in a Banach space $X$ is \emph{normal} if there is a constant $\kappa$ such that $\|x\| \le \kappa \|y\|$ whenever $0 \le x \le y$. When $C$ is normal, Thompson's metric is a complete metric on each part $C$, and the topology induced by Thompson's metric is equivalent to the norm topology \cite[Lemma 3]{Thompson63}. It is apparent from the definition that Thompson metric balls are bounded in the norm topology when $C$ is a normal cone. 

Let $x_n$ be a sequence in a Banach space $X$ with a partial order induced by a closed cone $C$. We say that $x_n$ is \emph{decreasing} (\emph{increasing}) if $x_{n+1} \le x_n$ ($x_{n+1} \ge x_n$) for all $n \in \N$.  A closed cone $C$ is \emph{regular} if every decreasing sequence in $C$ converges. All finite dimensional cones are regular and all regular cones are normal \cite[Proposition 19.2]{Deimling}.  

Many important closed cones in infinite dimensional Banach spaces have empty interior.  For example, the cone $L^p([a,b])_{\ge 0}$ of almost everywhere nonnegative functions in $L^p([a,b])$ has empty interior for all $1 \le p < \infty$. However, $L^p([a,b])_{\ge 0}$ is normal since $0 \le f \le g$ implies that $\|f\|_p \le \|g\|_p$. In fact, $L^p([a,b])$ is also regular by the monotone convergence theorem. 

Suppose that $C$ is a closed cone in a Banach space $X$. Let $u \in C$ and let $C_u$ be the part of $C$ containing $u$. Let 
\begin{equation} \label{order-unit}
X_u = \bigcup_{k > 0} [-ku,ku] ~\text{ and }~ \|x\|_u = \inf \{k > 0 : x \in [-ku,ku]\}.
\end{equation}
If $C$ is a normal cone, then $(X_u,\|\cdot\|_u)$ is a Banach space which is continuously embedded in $(X,\|\cdot\|)$ \cite[Proposition 19.9]{Deimling}. Furthermore, $\overline{C_u} = C \cap X_u$ is a normal, closed cone with interior equal to $C_u$ in $(X_u,\|\cdot\|_u)$. 
For this reason, when we are interested in the Thompson geometry on a part of a normal closed cone, we can assume without loss of generality that the part is the interior of a normal cone in some Banach space.

The following observation will be used in Section \ref{sec:Kras}.

\begin{lemma} \label{lem:falpha}
Let $C$ be a closed cone with nonempty interior in a Banach space. If $f: \inter{C} \rightarrow \inter{C}$ is $d_T$-nonexpansive, then for any $0 < \alpha < 1$ the map $\alpha f + (1 - \alpha) \id$ is also nonexpansive on $(\inter{C},d_T)$. 
\end{lemma}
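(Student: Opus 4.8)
The plan is to translate Thompson metric distances into two-sided order inequalities, transfer those inequalities through $f$ using nonexpansiveness, and then recombine them via the convexity of the cone order. Throughout, write $g = \alpha f + (1-\alpha)\id$; note first that $g$ does map $\inter{C}$ into itself, since $\inter{C}$ is convex and $0 < \alpha < 1$, so the statement makes sense.

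Fix $x, y \in \inter{C}$ and set $R = d_T(x,y)$, so that $y \in \cl{B_R(x)}$. By \eqref{balls} this closed ball equals the order interval $[e^{-R}x, e^R x]$, hence $e^{-R} x \le y \le e^R x$. Since $f$ is $d_T$-nonexpansive we have $d_T(f(x),f(y)) \le R$, and the same description of closed balls gives $e^{-R} f(x) \le f(y) \le e^R f(x)$.

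The key step is to combine the two upper inequalities $f(y) \le e^R f(x)$ and $y \le e^R x$: multiplying the first by $\alpha \ge 0$ and the second by $1-\alpha \ge 0$ and adding them yields $g(y) \le e^R g(x)$. Here I would use only that $C$ is a convex cone, so that its order is preserved under nonnegative scaling and under addition. The symmetric computation with the lower inequalities gives $e^{-R} g(x) \le g(y)$. Appealing to \eqref{balls} once more, the relation $e^{-R} g(x) \le g(y) \le e^R g(x)$ says precisely that $g(y) \in \cl{B_R(g(x))}$, that is, $d_T(g(x),g(y)) \le R = d_T(x,y)$, which is the claim.

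The only point requiring any care is that the order inequalities survive the convex combination, and this follows immediately from the cone axioms; there is no genuine analytic obstacle here, so the argument is short.
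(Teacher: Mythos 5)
Your proof is correct and follows essentially the same route as the paper's: both convert $d_T(x,y)=R$ into the order inequalities $e^{-R}x \le y \le e^{R}x$ (you via \eqref{balls}, the paper via closedness of $C$ applied directly to the definition of $d_T$), transfer them through $f$ by nonexpansiveness, and recombine with weights $\alpha$ and $1-\alpha$ using the cone axioms. The only addition is your remark that $g$ maps $\inter{C}$ into itself, which the paper leaves implicit.
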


\begin{proof}
Fix $x, y \in \inter{C}$. By definition, $d_T(x,y) = \log \inf \{ \beta \ge 1 : \beta^{-1} x \le y \le \beta x \}$. 
Suppose that $d_T(x,y) = \log \beta$ for some $\beta > 1$.  Since $C$ is closed, this means that 
$$\beta^{-1} x \le y \le \beta x.$$
Since $f$ is $d_T$-nonexpansive, we also have $d_T(f(x),f(y)) \le \log \beta$ so  
$$\beta^{-1} f(x) \le f(y) \le \beta f(x)$$
Therefore 
$$\beta^{-1} \left( \alpha f(x) + (1-\alpha) x \right) \le \alpha f(y) + (1-\alpha) y \le \beta \left( \alpha f(x) + (1-\alpha) x\right).$$
This means that
$$d_T(\alpha f(x) + (1-\alpha) x, \alpha f(y) + (1-\alpha) y) \le \log \beta = d_T(x,y),$$
and therefore $\alpha f + (1-\alpha) \id$ is nonexpansive. 
\end{proof}

\subsection{Nonexpansive maps and omega limit sets}

Let $(M,d)$ be a metric space.  Let $f: M \rightarrow M$ be nonexpansive.  The \emph{orbit} of $x \in M$ under $f$ is $\orbit(x,f) = \{f^k(x) : k \in \N\}$.  The \emph{omega limit set} of $x$ under $f$ is
$$\omega(x,f) = \bigcap_{n \in \N} \cl{\{ f^k(x) : k \ge n \}}.$$

The following result combines \cite[Lemma 3.1.2 and Corollary 3.1.5]{LemmensNussbaum}.  It can be traced back to a theorem of Freudenthal and Hurewicz which states that any surjective nonexpansive map from a compact metric space onto itself must be an isometry \cite{FrHu36}.  

\begin{proposition} \label{prop:isometry}
Let $(M,d)$ be a metric space.  If $f: M \rightarrow M$ is nonexpansive and $\orbit(x,f)$ has compact closure in $M$, then $\omega(x,f)$ is a nonempty compact set and $f$ restricted to $\omega(x,f)$ is an invertible isometry. 
\end{proposition}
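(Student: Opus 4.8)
The plan is to establish the two assertions separately: first that $\omega(x,f)$ is nonempty and compact, and then that $f$ restricted to it is an invertible isometry. For the first part, since $\orbit(x,f)$ has compact closure, I would note that $\omega(x,f)$ is the intersection of a nested decreasing family of nonempty compact sets $K_n = \cl{\{f^k(x) : k \ge n\}}$. Each $K_n$ is nonempty (it contains $f^n(x)$) and compact (being a closed subset of the compact set $\cl{\orbit(x,f)}$), and $K_{n+1} \subseteq K_n$. By the finite intersection property for compact sets, the intersection $\omega(x,f) = \bigcap_n K_n$ is nonempty and compact. I would also record the standard fact that $\omega(x,f)$ is invariant under $f$, meaning $f(\omega(x,f)) = \omega(x,f)$; forward invariance follows from continuity of $f$ applied to convergent subsequences, and the reverse inclusion requires a compactness argument to extract a limit of preimages.

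For the isometry claim, the heart of the matter is to show that $f$ restricted to $\omega(x,f)$ is distance-preserving and surjective, hence a bijective isometry of a compact metric space. The natural approach is to invoke the Freudenthal--Hurewicz theorem cited in the excerpt: it suffices to verify that $f|_{\omega(x,f)}$ maps $\omega(x,f)$ onto itself nonexpansively, whereupon the theorem forces it to be an isometry, and an isometry of a compact space that is surjective is automatically injective and thus invertible. Thus the key technical step is surjectivity of $f$ on $\omega(x,f)$. Given any $y \in \omega(x,f)$, I would produce a preimage by taking a sequence $f^{k_j}(x) \to y$ with $k_j \to \infty$, considering the points $f^{k_j - 1}(x)$, and using compactness of $\cl{\orbit(x,f)}$ to extract a convergent subsequence whose limit $z$ lies in $\omega(x,f)$ and satisfies $f(z) = y$ by continuity.

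The main obstacle I anticipate is the careful handling of the invariance and surjectivity arguments, specifically ensuring that the limit points obtained actually lie in $\omega(x,f)$ rather than merely in $\cl{\orbit(x,f)}$, and that the indices can be chosen to tend to infinity so that the extracted limits belong to the omega limit set. Once $f(\omega(x,f)) = \omega(x,f)$ is established with $\omega(x,f)$ compact and $f$ nonexpansive, the Freudenthal--Hurewicz result does the remaining work, so the bulk of the proof is really the topological bookkeeping of the omega limit set. Since the statement is attributed to combining \cite[Lemma 3.1.2 and Corollary 3.1.5]{LemmensNussbaum}, I would expect the cleanest route is simply to reference those results directly and indicate how they combine, rather than reproving Freudenthal--Hurewicz from scratch.
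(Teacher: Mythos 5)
Your proposal is correct and takes essentially the same route as the paper, which gives no independent proof but simply combines Lemma 3.1.2 and Corollary 3.1.5 of Lemmens--Nussbaum, results that rest on the Freudenthal--Hurewicz theorem the paper cites. Your outline---nested compact sets for nonemptiness and compactness, the subsequence extraction showing $f(\omega(x,f)) = \omega(x,f)$, and then Freudenthal--Hurewicz to upgrade the surjective nonexpansive restriction to an invertible isometry---is precisely the standard argument behind those citations, so the two approaches coincide.
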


\subsection{Measures of noncompactness}

Let $X$ be a Banach space and let $\mathcal{B}$ denote the bounded subsets of $X$. \emph{Kuratowski's measure of noncompactness} on $(X, \|\cdot\|)$ is the function $\gamma: \mathcal{B} \rightarrow [0,\infty)$ defined by 
$$\gamma(A) = \inf \{d > 0 : A \text{ admits a finite cover by sets with diameter} \le d \}.$$
This measure of noncompactness has several properties (see e.g., \cite[Proposition 7.2]{Deimling}), of which we note the following. For any $A, B \in \mathcal{B}$ and $\lambda \in \R$, 
\begin{itemize}
\item[P1.] \label{item:compact} $\gamma(A) = 0$ if and only if $\cl{A}$ is compact. 
\item[P2.] \label{item:seminorm} $\gamma$ is a seminorm, i.e., $\gamma(\lambda A) = |\lambda| \gamma(A)$ and $\gamma(A+B) \le \gamma(A)+\gamma(B)$.
\end{itemize}

Let $D \subseteq X$ and let $f: D \rightarrow X$ be continuous.  We say that $f$ is $\gamma$-\emph{condensing} if $\gamma(f(A)) < \gamma(A)$ whenever $A \subseteq D$ is bounded and $\gamma(A) > 0$. By the Darbo-Sadovski\u{\i} theorem (see e.g., \cite[Theorem 9.1]{Deimling}), if $C$ is a nonempty, closed, bounded, convex subset of $D$, and $f$ is $\gamma$-condensing with $f(C) \subseteq C$, then $f$ has a fixed point in $C$.  

The following lemma is a well known application of property (P2) which we will use in the sequel.
\begin{lemma} \label{lem:falpha2}
Let $X$ be a Banach space and let $D \subseteq X$. If $f: D \rightarrow X$ is $\gamma$-condensing, then for any $0 < \alpha < 1$ the map $\alpha f + (1 - \alpha) \id$ is also $\gamma$-condensing.
\end{lemma}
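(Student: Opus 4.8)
The plan is to set $g = \alpha f + (1-\alpha)\id$ and verify the condensing inequality $\gamma(g(A)) < \gamma(A)$ directly for an arbitrary bounded $A \subseteq D$ with $\gamma(A) > 0$. The first step is to record the elementary set containment
$$g(A) \subseteq \alpha f(A) + (1-\alpha) A,$$
which holds because each point $g(x) = \alpha f(x) + (1-\alpha)x$ with $x \in A$ is the sum of a point of $\alpha f(A)$ and a point of $(1-\alpha)A$. (The containment is generally not an equality, since the two summands need not use the same $x$, but containment is all that is required.)

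Next I would combine two facts about $\gamma$. The monotonicity of $\gamma$ with respect to inclusion is immediate from its definition, since any finite cover of a larger set restricts to a finite cover of any subset with no larger diameters. Together with the seminorm property (P2), this yields the chain
$$\gamma(g(A)) \le \gamma\bigl(\alpha f(A) + (1-\alpha)A\bigr) \le \alpha\,\gamma(f(A)) + (1-\alpha)\,\gamma(A).$$
Finally, invoking the hypothesis that $f$ is $\gamma$-condensing and that $\gamma(A) > 0$, we have $\gamma(f(A)) < \gamma(A)$, so that
$$\gamma(g(A)) \le \alpha\,\gamma(f(A)) + (1-\alpha)\,\gamma(A) < \alpha\,\gamma(A) + (1-\alpha)\,\gamma(A) = \gamma(A),$$
where the strict inequality uses $0 < \alpha < 1$. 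This shows $g$ is $\gamma$-condensing.

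Honestly, there is no substantive obstacle here: the result is a routine consequence of (P2). The only points deserving a word of care are the set containment in the first step and the (standard, though not explicitly listed) monotonicity of $\gamma$. The mechanism worth highlighting is that the convex-combination coefficients convert the merely nonstrict triangle inequality of the seminorm into the strict inequality demanded by the definition of $\gamma$-condensing, precisely because the condensing hypothesis supplies strictness in the $f(A)$ term.
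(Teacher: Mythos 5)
Your proof is correct and follows essentially the same route as the paper's: apply the seminorm property (P2) to $\alpha f(A) + (1-\alpha)A$ and let the condensing hypothesis supply the strict inequality. The only difference is that you make explicit two points the paper leaves tacit --- the containment $g(A) \subseteq \alpha f(A) + (1-\alpha)A$ together with monotonicity of $\gamma$, and the restriction to $\gamma(A) > 0$ --- which is a harmless (indeed slightly more careful) elaboration of the same argument.
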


\begin{proof}
Let $A$ be a bounded subset of $D$. By property (P2) of $\gamma$, 
\begin{align*}
\gamma(\alpha f(A) + (1-\alpha) A) &\le \gamma(\alpha f(A)) + \gamma((1-\alpha)A) \\
&\le \alpha \gamma(f(A)) + (1-\alpha)\gamma(A) \\
&< \gamma(A). & (\text{since }f\text{ is }\gamma\text{-condensing})
\end{align*}
Therefore $\alpha f + (1-\alpha) \id$ is $\gamma$-condensing.
\end{proof}

Let $C$ be a closed cone with nonempty interior in a Banach space. 
Let $\mathcal{B}_T$ denote the $d_T$-bounded subsets of $\inter{C}$ and let $\diam_T(A)$ denote the Thompson's metric diameter of $A \in \mathcal{B}_T$. 
\emph{Kuratowski's measure of noncompactness} on $(\inter{C},d_T)$ is
$$\tau(A) = \inf \{ d > 0: A \text{ has a finite cover by sets } A_i \text{ with } \diam_T(A_i) \le d \}.$$
The properties of $\tau$ were investigated by Herzog and Kunstmann \cite{HeKu14}. Although $\tau$ satisfies property (P1) of $\gamma$, it does not satisfy (P2). Instead by \cite[Proposition 2.5]{HeKu14}, 
\begin{itemize}
\item[P3.] $\tau(\lambda A) = \tau(A)$ for all $\lambda > 0$ and $\tau(A+B) \le \max \{ \tau(A), \tau(B) \}$,  
\end{itemize}
for any $A, B \in \mathcal{B}_T$.

Let $D$ be a subset of $\inter{C}$ and let $f:D \rightarrow \inter{C}$ be continuous. Then $f$ is \emph{$\tau$-condensing} if $\tau(f(A)) < \tau(A)$ for every $d_T$-bounded subset $A \subset D$ such that $\tau(A) > 0$. One advantage of $\tau$ over $\gamma$ is that if $f$ is $\tau$-condensing, then so is $c f$ for all $c > 0$. If $D$ is a nonempty closed, convex, $d_T$-bounded subset of $\inter{C}$, and $f:D \rightarrow D$ is continuous and $\tau$-condensing, then $f$ has a fixed point in $D$ \cite[Theorem 4.1]{HeKu14}. Here we will prove that the sum of a $\tau$-condensing map and a $d_T$-nonexpansive map is $\tau$-condensing.  

\begin{lemma} \label{lem:plus-u}
Let $C$ be a normal, closed cone with nonempty interior in a Banach space. The map $f(x) = x + u$ is $\tau$-condensing on $\inter{C}$ for any $u \in \inter{C}$. 
\end{lemma}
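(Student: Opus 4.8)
The plan is to show that $f(x) = x + u$ is not merely non-distance-increasing but is a genuine Thompson-metric \emph{contraction} on every $d_T$-bounded set, with a contraction factor strictly below $1$; a standard covering argument then converts this into $\tau(f(A)) < \tau(A)$. The point of the exercise is the strictness of the inequality: applying property (P3) to $A + \{u\}$ only gives $\tau(A+u) \le \max\{\tau(A),\tau(\{u\})\} = \tau(A)$, which is the wrong, non-strict, bound. So the main work is a quantitative estimate showing that adding $u$ pulls points together by a definite factor on a bounded region.

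First I would reduce to a bounded order interval. If $A$ is $d_T$-bounded then $A \subseteq B_R(w)$ for some $w \in \inter{C}$ and $R > 0$, so by \eqref{balls} every $x \in A$ satisfies $e^{-R} w \le x \le e^R w$. Since $u, w \in \inter{C}$ are comparable, $u \ge m(u/w)\, w$, and hence every $x \in A$ obeys the upper bound $x \le K u$ with $K = e^R / m(u/w) > 0$. This uniform comparison of the elements of $A$ with $u$ is what makes the contraction possible.

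The core step is a pointwise contraction estimate. Fix $x, y \in A$ and write $d_T(x,y) = \log \beta$ with $\beta \ge 1$, so that $\beta^{-1} x \le y \le \beta x$ and $\beta^{-1} y \le x \le \beta y$ (using that $C$ is closed). I set $\beta' = \frac{K\beta + 1}{K+1}$, which is a weighted average of $\beta$ and $1$ and thus satisfies $1 \le \beta' \le \beta$. From $y \le \beta x$ and $x \le Ku$ one gets $y + u \le \beta x + u$ together with $(\beta - \beta')x \le (\beta - \beta')Ku \le (\beta'-1)u$, the last inequality being exactly the defining relation for $\beta'$; combining these yields $y + u \le \beta'(x+u)$. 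The symmetric computation, using $x \le \beta y$ and $y \le Ku$, gives $x + u \le \beta'(y+u)$, and therefore $d_T(x+u, y+u) \le \log \beta'$.

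It remains to extract a uniform factor and finish. Consider $g(\beta) = \log\beta' / \log\beta$ for $\beta \in (1, e^{\diam_T(A)}]$, extended by $g(1^+) = K/(K+1)$, which follows from the first-order expansion $\beta' = 1 + \tfrac{K}{K+1}(\beta-1) + \cdots$ at $\beta = 1$. Since $\beta' < \beta$ for $\beta > 1$, we have $g < 1$ on this compact interval, so by continuity $c := \sup g < 1$, giving $d_T(f(x),f(y)) \le c\, d_T(x,y)$ for all $x,y \in A$. A map that is $c$-Lipschitz in $d_T$ sends any finite cover of $A$ by sets of $d_T$-diameter $\le d$ to a cover of $f(A)$ by sets of diameter $\le cd$, so $\tau(f(A)) \le c\,\tau(A)$; when $\tau(A) > 0$ this is strictly less than $\tau(A)$, which is exactly $\tau$-condensing. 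I expect the genuine obstacle to be the dependence on $d_T$-boundedness: since $g(\beta) \to 1$ as $\beta \to \infty$, no uniform contraction constant exists on all of $\inter{C}$, and both the finite diameter $\diam_T(A)$ and the upper bound $Ku$ are essential for the argument to close.
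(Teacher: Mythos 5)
Your proof is correct, and it follows the same two-step skeleton as the paper's: establish that $x \mapsto x+u$ is a \emph{strict} $d_T$-contraction on every $d_T$-bounded set, then run the covering argument to convert the Lipschitz constant $c<1$ into $\tau(f(A)) \le c\,\tau(A) < \tau(A)$. The difference is where the contraction estimate comes from: the paper simply cites \cite[Theorem 5.3]{LaLi12} for this fact, whereas you prove it from scratch. Your computation checks out: with $x,y \le Ku$ and $d_T(x,y)=\log\beta$, the choice $\beta' = \frac{K\beta+1}{K+1}$ satisfies $\beta-\beta' = \frac{\beta-1}{K+1}$ and $\beta'-1 = \frac{K(\beta-1)}{K+1}$, so $(\beta-\beta')Ku = (\beta'-1)u$ exactly, giving $y+u \le \beta'(x+u)$ and symmetrically $x+u \le \beta'(y+u)$; and since $\beta' = 1 + \frac{K}{K+1}(\beta-1)$, the ratio $g(\beta)=\log\beta'/\log\beta$ extends continuously to $[1, e^{\diam_T(A)}]$ with $g(1)=K/(K+1)$ and stays strictly below $1$ on that compact interval, so a uniform $c<1$ exists. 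What your route buys is a self-contained argument with an explicit modulus of contraction depending only on $K$ (i.e., on how the bounded set compares to $u$) and on $\diam_T(A)$; what the paper's route buys is brevity, outsourcing exactly the estimate you reproved. Your closing observation that $g(\beta) \to 1$ as $\beta \to \infty$, so no global contraction constant exists on all of $\inter{C}$, correctly identifies why $d_T$-boundedness is essential and why the lemma is stated the way it is.
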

\begin{proof}
On any $d_T$-bounded set $A \subset \inter{C}$, the map $f$ is a strict $d_T$-contraction, i.e., there is a constant $0 < c < 1$ such that $d_T(f(x),f(y)) \le c d_T(x,y)$ for all $x, y \in A$ \cite[Theorem 5.3]{LaLi12}.  If $\tau(A) = \delta > 0$, then for any $\epsilon > 0$, $A$ can be covered by a finite collection of sets $A_1, \ldots, A_n \subseteq A$ each with $\diam_T(A_i) \le \delta + \epsilon$. Then $\diam_T(f(A_i)) \le c \diam_T(A_i) \le c (\delta+ \epsilon)$, so $\tau(f(A)) \le c(\delta + \epsilon)$ for any $\epsilon > 0$. Therefore $\tau(f(A)) \le c \delta < \delta = \tau(A)$.
%
\end{proof}

\begin{theorem} \label{thm:condensing-nonexpansive}
Let $C$ be a normal, closed cone with nonempty interior in a Banach space. If $f: \inter{C} \rightarrow \inter{C}$ is $\tau$-condensing and $g:\inter{C} \rightarrow \inter{C}$ is $d_T$-nonexpansive, then $f+g$ is $\tau$-condensing. 
\end{theorem}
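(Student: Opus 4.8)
The plan is to reduce everything to a single cone inequality on small pieces of $A$. The naive approach is to note that $(f+g)(A) \subseteq f(A) + g(A)$ and invoke monotonicity of $\tau$ together with property (P3), which gives $\tau((f+g)(A)) \le \max\{\tau(f(A)), \tau(g(A))\}$. Since $f$ is $\tau$-condensing, $\tau(f(A)) < \tau(A)$, and since $g$ is $d_T$-nonexpansive we have $\diam_T(g(A_i)) \le \diam_T(A_i)$ for every cover piece, so $\tau(g(A)) \le \tau(A)$. The trouble is that this last bound need not be strict, so the argument only yields $\tau((f+g)(A)) \le \tau(A)$. The whole difficulty is therefore to recover a \emph{strict} inequality, and this forces us to exploit the additive structure of the cone rather than treating $f(A)$ and $g(A)$ as independent sets.

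Fix a $d_T$-bounded $A \subseteq \inter{C}$ with $\delta := \tau(A) > 0$, and write $\delta_f := \tau(f(A)) < \delta$ and $h := f + g$. For small $\epsilon > 0$ I would first build a finite cover of $A$ by sets $A_1, \dots, A_n$ that are simultaneously (i) \emph{$f$-thin}, $\diam_T(f(A_i)) \le \delta_f + \epsilon =: \eta < \delta$, and (ii) \emph{thin}, $\diam_T(A_i) \le \delta + \epsilon$. Such a cover exists by taking the common refinement of a finite cover realizing $\tau(f(A))$, pulled back through $f$, with a finite cover realizing $\tau(A)$; monotonicity of $\tau$ guarantees both are finite. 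The images $h(A_i)$ then cover $h(A)$, so it suffices to bound each $\diam_T(h(A_i))$ strictly below $\delta$.

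The heart of the argument is an elementary estimate on a single piece. For $x, y \in A_i$, closedness of $C$ gives $f(x) \le e^\eta f(y)$ and $g(x) \le \beta\, g(y)$ with $\beta := M(g(x)/g(y)) \le e^{d_T(x,y)} \le e^{\delta+\epsilon}$. Setting $\lambda := (1 + M_0)^{-1}$, where $M_0 := \sup_{z \in A} M(g(z)/f(z))$, a direct computation shows that whenever $\beta \ge e^\eta$,
\[
f(x) + g(x) \le \big[(1-\lambda)\beta + \lambda e^\eta\big]\,(f(y) + g(y)),
\]
the verification reducing to the inequality $\lambda\, g(y) \le (1-\lambda) f(y)$, i.e. $M(g(y)/f(y)) \le (1-\lambda)/\lambda = M_0$. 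Running the same estimate with $x$ and $y$ interchanged, and combining with the trivial bound $M(h(x)/h(y)) \le e^\eta$ in the case $\beta \le e^\eta$, yields $e^{d_T(h(x),h(y))} \le (1-\lambda) e^{\delta+\epsilon} + \lambda e^\eta$ for all $x, y \in A_i$.

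The main obstacle is ensuring that the constant $\lambda$ is bounded away from $0$, i.e. that $M_0 < \infty$: if $g(z)$ could dwarf $f(z)$ arbitrarily, the added term $f$ would be negligible and no contraction could occur. Here boundedness saves us. Since $A$ is $d_T$-bounded and $g$ is nonexpansive, $g(A)$ is $d_T$-bounded; and $f(A)$ is $d_T$-bounded because $\tau(f(A))$ is defined. As both images lie in the single part $\inter{C}$, fixing a reference point $z_0 \in A$ and using $M(g(z)/f(z)) \le M(g(z)/g(z_0))\,M(g(z_0)/f(z_0))\,M(f(z_0)/f(z))$ bounds $M_0$ by $\exp\!\big(\diam_T(g(A)) + \diam_T(f(A))\big)\,M(g(z_0)/f(z_0)) < \infty$. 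With $M_0$ finite and $\lambda > 0$ fixed, the piecewise bound gives $\tau(h(A)) \le \log\big[(1-\lambda) e^{\delta+\epsilon} + \lambda e^\eta\big]$, and letting $\epsilon \to 0$ (so $\eta \to \delta_f$) yields $\tau(h(A)) \le \log\big[(1-\lambda) e^\delta + \lambda e^{\delta_f}\big] < \delta$, since $\delta_f < \delta$. Hence $f + g$ is $\tau$-condensing. This estimate is the natural quantitative generalization of Lemma \ref{lem:plus-u}, recovered when $f$ is the constant map $x \mapsto u$.
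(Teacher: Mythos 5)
Your proof is correct, but it takes a genuinely different route from the paper's. The paper uses an additive shift together with property (P3): since $f(A)$ is $d_T$-bounded, there is $u \in \inter{C}$ with $x \ge u$ for all $x \in f(A)$; writing $f(A) + g(A) = \bigl(f(A) - cu\bigr) + \bigl(g(A) + cu\bigr)$, subtracting $cu$ enlarges Thompson diameters by at most $2|\log(1-c)|$, so $\tau(f(A)-cu) < \tau(A)$ for small $c>0$, while $\tau(g(A)+cu) < \tau(A)$ because translation by an interior vector is a strict $d_T$-contraction on $d_T$-bounded sets (Lemma \ref{lem:plus-u}, which rests on \cite[Theorem 5.3]{LaLi12}); then (P3) gives $\tau(f(A)+g(A)) \le \max\{\tau(f(A)-cu),\, \tau(g(A)+cu)\} < \tau(A)$. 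You instead avoid both (P3) and Lemma \ref{lem:plus-u} by proving a direct two-point estimate on a common refinement cover whose pieces are simultaneously thin and $f$-thin. Your key inequality is correct: with $\mu := (1-\lambda)\beta + \lambda e^{\eta}$ one has $e^{\eta} - \mu = (1-\lambda)(e^{\eta}-\beta)$ and $\mu - \beta = \lambda(e^{\eta}-\beta)$, so for $\beta \ge e^{\eta}$ the claim $f(x)+g(x) \le \mu\,(f(y)+g(y))$ reduces exactly to $\lambda\, g(y) \le (1-\lambda) f(y)$, i.e.\ $g(y) \le M_0 f(y)$; and your verification that $M_0 = \sup_{z \in A} M(g(z)/f(z)) < \infty$ (via the cocycle bound $M(g(z)/f(z)) \le M(g(z)/g(z_0))\,M(g(z_0)/f(z_0))\,M(f(z_0)/f(z))$, plus $d_T$-boundedness of $g(A)$ by nonexpansiveness and of $f(A)$ because any set of finite $\tau$ admits a finite cover by sets of finite $d_T$-diameter) is precisely what makes the weight $\lambda$ positive and independent of $\epsilon$, so the limit $\epsilon \to 0$ yields $\tau((f+g)(A)) \le \log\bigl[(1-\lambda)e^{\tau(A)} + \lambda e^{\tau(f(A))}\bigr] < \tau(A)$. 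What your route buys: it is self-contained (no appeal to the external contraction theorem behind Lemma \ref{lem:plus-u} and no use of (P3)), it produces an explicit quantitative modulus of condensation in terms of $\tau(A)$, $\tau(f(A))$, and $M_0$, and it recovers Lemma \ref{lem:plus-u} as the special case of constant $f$. What the paper's route buys: brevity, and no need for any uniform control of the ratio of $g$ to $f$ --- a single lower bound $u \le f(A)$ and the max-subadditivity (P3) do all the work.
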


\begin{proof}
Let $A \subset \inter{C}$ be bounded in Thompson's metric and suppose that $\tau(A) = \delta > 0$. Then $\tau(f(A)) < \delta$ and $\tau(g(A)) \le \delta$ since $f$ is $\tau$-condensing and $g$ is nonexpansive. 

Since $f(A)$ is $d_T$-bounded, we can choose $u \in \inter{C}$ such that $x \ge u$ for all $x \in f(A)$. Then for every $x \in f(A)$ and $0 < c < 1$, 
$$x - cx \le x - cu \le x.$$
This implies that $d_T(x, x - cu) \le |\log (1-c)|$. 

Since $\tau(f(A)) < \delta$, we can cover $f(A)$ with a finite collection of sets $B_1, \ldots, B_n \subset \inter{C}$ each with $\diam_T(B_i) \le \delta'$ where $\delta' < \delta$. Now consider $\diam_T(B_i - cu)$.  Observe that if $x, y \in B_i - cu$, then by the triangle inequality
\begin{align*}
d_T(x,y) &\le d_T(x,x+cu) + d_T(x+cu,y+cu) + d_T(y+cu, y) \\
&\le \delta' + 2 |\log(1 - c)|.
\end{align*}
By choosing $c > 0$ small enough, we can guarantee that $\diam_T(B_i - cu) < \delta$ for every $B_i$, so $\tau(f(A)-cu) < \delta$. By Lemma \ref{lem:plus-u}, $\tau(g(A)+cu) < \delta$.  Then by property (P3) of $\tau$,
\begin{align*}
\tau(f(A) + g(A)) &= \tau(f(A) - cu + g(A) + cu) \\
&\le \max \{\tau(f(A) - cu), \tau(g(A)+cu) \} < \delta.
\end{align*}
Therefore $f+g$ is $\tau$-condensing.
\end{proof} 

\begin{corollary} \label{cor:falpha3}
Let $C$ be a normal, closed cone with nonempty interior in a Banach space. If $f: \inter{C} \rightarrow \inter{C}$ is $\tau$-condensing, then so is $\alpha f + (1- \alpha) \id$ for every $0 < \alpha < 1$.  
\end{corollary}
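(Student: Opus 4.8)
The plan is to recognize $\alpha f + (1-\alpha)\id$ as a sum of the precise form to which Theorem \ref{thm:condensing-nonexpansive} applies, namely $(\alpha f) + g$ with $g = (1-\alpha)\id$. Before invoking the theorem I would check that both summands are well defined as self-maps of $\inter{C}$: since $\alpha > 0$ and $f(x) \in \inter{C}$, the map $\alpha f$ sends $\inter{C}$ into $\inter{C}$, and since $0 < 1-\alpha$ and $\inter{C}$ is invariant under multiplication by positive scalars, so does $g$. It then remains to verify the two hypotheses of Theorem \ref{thm:condensing-nonexpansive}, one for each summand.

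The first observation is that $g = (1-\alpha)\id$ is in fact a $d_T$-isometry, and in particular $d_T$-nonexpansive. This is immediate from the scale invariance of Thompson's metric: directly from the definition of $M$ one has $M(\lambda x/\lambda y) = M(x/y)$ for every $\lambda > 0$, so $d_T(\lambda x, \lambda y) = d_T(x,y)$. Taking $\lambda = 1-\alpha$ gives $d_T(g(x),g(y)) = d_T(x,y)$ for all $x,y \in \inter{C}$.

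The second observation is that $\alpha f$ is $\tau$-condensing. This is exactly the scaling property of $\tau$ noted just before Lemma \ref{lem:plus-u}, which follows from property (P3): since $\tau(\lambda A) = \tau(A)$ for all $\lambda > 0$, for any $d_T$-bounded $A$ with $\tau(A) > 0$ we have $\tau(\alpha f(A)) = \tau(f(A)) < \tau(A)$ because $f$ is $\tau$-condensing.

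With these two facts established, I would apply Theorem \ref{thm:condensing-nonexpansive} with $\alpha f$ in the role of the $\tau$-condensing map and $g = (1-\alpha)\id$ in the role of the $d_T$-nonexpansive map, concluding at once that $\alpha f + (1-\alpha)\id$ is $\tau$-condensing. I do not expect any genuine obstacle here: all of the substance lies in Theorem \ref{thm:condensing-nonexpansive} and in the scale invariance of both $d_T$ and $\tau$, so this corollary is essentially a packaging of those facts into the form needed for the Krasnoselskii-iteration results of Section \ref{sec:Kras}.
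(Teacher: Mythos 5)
Your proposal is correct and follows exactly the paper's own argument: decompose the map as $\alpha f + (1-\alpha)\id$, note that $\alpha f$ is $\tau$-condensing by the scale invariance in property (P3), that $(1-\alpha)\id$ is a $d_T$-isometry and hence nonexpansive, and then invoke Theorem \ref{thm:condensing-nonexpansive}. The extra details you supply (well-definedness of the summands, the computation $M(\lambda x/\lambda y)=M(x/y)$) are fine but not different in substance from the paper's proof.
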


\begin{proof}
Observe that $\alpha f$ is $\tau$-condensing by property (P3) and $(1-\alpha) \id$ is a Thompson metric isometry, so it is nonexpansive.  Therefore $\alpha f + (1-\alpha)\id$ is $\tau$-condensing by Theorem \ref{thm:condensing-nonexpansive}.
\end{proof}

\section{Krasnoselskii iteration} \label{sec:Kras}

\begin{theorem} \label{thm:Krasnoselskii}
Let $C$ be a normal, closed cone with nonempty interior in a Banach space. Let $f: \inter{C} \rightarrow \inter{C}$ be $d_T$-nonexpansive and either $\gamma$ or $\tau$-condensing. If $f$ has a fixed point in $\inter{C}$, then for any $0 < \alpha < 1$ and $x_0 \in \inter{C}$, the sequence $x_n$ defined by 
$$x_{n+1} = \alpha f(x_n) + (1-\alpha) x_n$$
converges to a fixed point of $f$. 
\end{theorem}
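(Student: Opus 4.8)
The plan is to study the relaxed map $g = \alpha f + (1-\alpha)\id$, so that the iteration reads $x_{n+1} = g(x_n)$ and $x_n = g^n(x_0)$. By Lemma \ref{lem:falpha}, $g$ is $d_T$-nonexpansive, and by Lemma \ref{lem:falpha2} (in the $\gamma$ case) or Corollary \ref{cor:falpha3} (in the $\tau$ case), $g$ is condensing for the relevant measure of noncompactness. Moreover $\Fix(g) = \Fix(f)$, since $g(x) = x$ is equivalent to $\alpha(f(x) - x) = 0$. I would fix a point $p \in \Fix(f)$; because $g$ is nonexpansive and fixes $p$, the sequence $d_T(x_n, p)$ is non-increasing, so the entire orbit lies in the closed ball $\cl{B_R(p)} = [e^{-R}p, e^R p]$ with $R = d_T(x_0, p)$. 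This order interval is $d_T$-bounded, contained in $\inter{C}$, and, since $C$ is normal, norm-bounded.

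Next I would show that $\orbit(x_0, g)$ has compact closure in $(\inter{C}, d_T)$, handling both condensing hypotheses at once via the self-similarity $\orbit(x_0,g) = \{x_0\} \cup g(\orbit(x_0,g))$. Adjoining a single point changes neither $\gamma$ nor $\tau$ (each satisfies $\nu(A \cup B) = \max\{\nu(A), \nu(B)\}$ and vanishes on a point), so $\nu(\orbit(x_0,g)) = \nu(g(\orbit(x_0,g)))$ for $\nu \in \{\gamma, \tau\}$; if $\nu(\orbit(x_0,g)) > 0$ this contradicts the condensing property, whence $\nu(\orbit(x_0,g)) = 0$. By property (P1) the orbit closure is then compact (in norm for $\gamma$, in $d_T$ for $\tau$), and since it lies in $[e^{-R}p, e^R p] \subseteq \inter{C}$, where the two topologies agree, it is $d_T$-compact either way. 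Proposition \ref{prop:isometry} now applies to $g$: the omega limit set $\omega := \omega(x_0, g)$ is nonempty and compact, and $g$ restricted to $\omega$ is an invertible isometry.

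It remains to upgrade subsequential convergence to convergence of the full sequence to a fixed point. I would first note that $r_n := d_T(x_n, x_{n+1}) = d_T(g(x_{n-1}), g(x_n)) \le r_{n-1}$ is non-increasing, so $r_n \to \ell$ for some $\ell \ge 0$; passing to the limit along a subsequence approaching any $y \in \omega$ and using continuity of $g$ gives $d_T(y, g(y)) = \ell$ for every $y \in \omega$. The crux is that $\ell = 0$. Granting this, every point of $\omega$ is fixed by $g$; choosing any $q \in \omega$, nonexpansiveness makes $d_T(x_n, q)$ non-increasing, while $q$ being a subsequential limit forces that limit to be $0$. Hence $x_n \to q \in \Fix(g) = \Fix(f)$, as required.

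The main obstacle is therefore the asymptotic regularity statement $\ell = 0$, and this is exactly where the classical Krasnoselskii--Ishikawa machinery fails to transfer: those arguments derive $d(x_n,x_{n+1}) \to 0$ from Goebel--Kirk-type inequalities that use the convexity of a \emph{normed} (or hyperbolic) metric, whereas here the relaxation $g(x) - x = \alpha(f(x) - x)$ lives in the linear structure of $X$ while nonexpansiveness is measured in the non-hyperbolic metric $d_T$. I would argue $\ell = 0$ by contradiction: if $\ell > 0$, use the invertible isometry $g|_\omega$ to generate a bi-infinite orbit $(y_k)_{k \in \mathbb{Z}} \subset \omega$ with $d_T(y_k, y_{k+1}) = \ell$ for all $k$, where each $y_{k+1} = \alpha f(y_k) + (1-\alpha)y_k$ is a proper convex combination of $y_k$ and $f(y_k)$. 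Since all $y_k$ and, by nonexpansiveness of $f$, all $f(y_k)$ lie in $[e^{-R}p, e^R p]$ while each $y_{k+1}$ sits on the $d_T$-sphere of radius $r := \lim_n d_T(x_n,p)$ about $p$, the supporting-functional description of $M(\cdot/p)$ and $M(p/\cdot)$ in \eqref{functionals} forces $y_k$ and $f(y_k)$ onto a common supporting face for every $k$. I expect that propagating this rigidity along the isometric orbit contradicts $\ell > 0$ unless the orbit is constant, and that extracting genuine strictness from the order structure, despite order intervals not being strictly convex, is where the real work of the proof lies.
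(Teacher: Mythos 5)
Your setup coincides with the paper's: pass to $g = \alpha f + (1-\alpha)\id$, invoke Lemma \ref{lem:falpha} and Lemma \ref{lem:falpha2}/Corollary \ref{cor:falpha3}, use the fixed point $p$ to bound the orbit, conclude compact orbit closure from the condensing property and (P1), and apply Proposition \ref{prop:isometry} to get that $\omega(x_0,g)$ is nonempty, compact, and that $g$ restricts to an invertible isometry on it. Your reduction is also sound: if the constant displacement $\ell = \lim_n d_T(x_n,x_{n+1})$ is zero, then every point of $\omega(x_0,g)$ is fixed and Fej\'er monotonicity finishes the argument. But the proof has a genuine gap exactly where you say the real work lies: you never prove $\ell = 0$, you only conjecture that ``propagating this rigidity along the isometric orbit contradicts $\ell > 0$.'' Moreover, the rigidity you extract does not suffice. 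Anchoring the functional at the fixed point $p$, the equality $d_T(y_{k+1},p) = r$ with $y_{k+1} = \alpha f(y_k) + (1-\alpha)y_k$ and the attained supremum in \eqref{functionals} give, say, $\phi(y_{k+1}) = e^r\phi(p)$, hence $\phi(y_k) = \phi(f(y_k)) = e^r\phi(p)$; this propagates \emph{backward} in $k$, but a whole face of the order interval $[e^{-r}p,e^rp]$ can satisfy these equalities, so nothing prevents the orbit from moving with step $\ell > 0$ inside that face. No contradiction follows from these constraints alone.

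The paper closes this hole with a different anchoring plus a recurrence argument, and it never proves asymptotic regularity directly: instead it shows $\omega(x_0,g)$ is a singleton. Pick $y,z \in \omega(x_0,g)$ realizing the \emph{diameter} of the omega limit set, with $d_T(y,z) = \log\bigl(\phi(z)/\phi(y)\bigr) = \log(b/a)$ for a functional $\phi \in C^*$ realizing $M(z/y)$. Extremality of $a$ and $b$ over all of $\omega(x_0,g)$, the convex-combination structure $y = \alpha f(y^{-1}) + (1-\alpha)y^{-1}$, and $d_T$-nonexpansiveness of $f$ force $\phi(g^{-k}(z)) = b$ and $\phi(g^{-k}(y)) = a$ for all $k$ (this is the same backward-propagation mechanism you identified, but applied to the diameter pair rather than to distance from $p$). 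The decisive extra ingredient is recurrence: the orbit passes arbitrarily close to $y$ and later arbitrarily close to $z$, so by nonexpansiveness $g^n(y)$ is arbitrarily close to $z$, and since $g$ is an isometry on $\omega(x_0,g)$, $d_T(y, g^{-n}(z))$ is arbitrarily small; yet $\phi(y) = a$ and $\phi(g^{-n}(z)) = b$ force $d_T(y,g^{-n}(z)) \ge \log(b/a)$. Hence $a = b$, the diameter is zero, and $\omega(x_0,g)$ is a single fixed point. If you want to salvage your outline, replace the sphere-about-$p$ rigidity with this diameter-pair argument; the asymptotic regularity $\ell = 0$ then comes out as a corollary rather than serving as the input.
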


\begin{proof} 
Let $g = \alpha f + (1-\alpha) \id$.  Note that $g$ is nonexpansive by Lemma \ref{lem:falpha} and either $\gamma$-condensing by Lemma \ref{lem:falpha2} or $\tau$-condensing by Corollary \ref{cor:falpha3}. Since $f$ and therefore $g$ both have a fixed point in $\inter{C}$, the orbit $\orbit(x,g)$ is bounded in $(\inter{C}, d_T)$ and therefore also in the norm topology.  
Since $g$ is $\gamma$ or $\tau$-condensing, it follows that $\orbit(x,g)$ has compact closure by property (P1). Therefore the omega limit set $\omega(x,g)$ is a nonempty compact subset of $\inter{C}$.

Choose $y, z \in \omega(x,g)$ such that $d_T(y,z)$ is maximal.  We may assume without loss of generality that $d_T(y,z) = \log M(z/y)$. Then there is a linear functional $\phi \in C^*$ such that $M(z/y) = {\phi(z)}/{\phi(y)}$ (see \cite[Lemma 2.2]{LLNW18}).  Let $a = \phi(y)$ and $b = \phi(z)$ and note that $b \ge a$ since $0 \le d_T(y,z) = \log(b/a)$. 

By Proposition \ref{prop:isometry}, $g$ is an invertible isometry on $\omega(x,g)$. Let $g^{-1}$ denote the inverse of $g$ on $\omega(x,g)$ and let $y^{-1} = g^{-1}(y)$ and $z^{-1} = g^{-1}(z)$. 
Observe that $a \le \phi(w) \le b$ for all $ w \in \omega(x,g)$, otherwise $d_T(w,z)$ or $d_T(y,w)$ would be greater than $\log(b/a)$ by \eqref{functionals}, but $\log(b/a)$ is the maximal distance between pairs in $\omega(x,g)$. In particular, $\phi(y^{-1}) \ge a$ and $\phi(z^{-1}) \le b$.   

Since $\phi(y) = a$ and $y = g(y^{-1})$ is a convex combination of $y^{-1}$ and $f(y^{-1})$, it follows that $\phi(f(y^{-1})) \le a$.  Similarly $\phi(f(z^{-1})) \ge b$.  However, 

\begin{align*}
\log \frac{b}{a} &\le \log \frac{\phi(f(z^{-1}))}{\phi(f(y^{-1}))} \\
&\le d_T(f(y^{-1}),f(z^{-1}))  \\
&\le d_T(y^{-1},z^{-1}) & (\text{nonexpansiveness})\\
&= \log \frac{b}{a}. & (\text{since }g \text{ is an isometry on } \omega(x,g))
\end{align*}
We conclude that $\phi(y^{-1}) = a$ and $\phi(z^{-1}) = b$.

We can repeat this argument to prove that $\phi(z^{-k}) = b$ for all $k \in \N$ where $z^{-k} = g^{-k}(z) \in \omega(x,g)$. However, there is a point $g^m(x) \in \mathcal{O}(x,g)$ that is arbitrarily close to $y$ and an $n \in \N$ such that $g^{m+n}(x)$ is arbitrarily close to $z$.  Then $g^n(y)$ will be arbitrarily close to $z$ by the nonexpansiveness of $g$.  Since $g$ is an isometry on $\omega(x,g)$, we have $d_T(g^n(y),z) = d_T(y,z^{-n})$ arbitrarily small. But since $\phi(y) = a$ and $\phi(z^{-k}) = b$, we have $d_T(y,z^{-k}) \ge \log (b/a)$ for all $k \in \N$, which is a contradiction unless $a=b$ and $\omega(x,g)$ is a singleton.  
\end{proof}

A \emph{retraction} is a continuous map $r$ from a topological space $X$ to a subspace $Y \subseteq X$ such that $r(X) = Y$ and $r$ restricted to $Y$ is the identity map.  The set $Y$ is called a \emph{retract} of $X$. If $r$ is also nonexpansive, then $Y$ is called a \emph{nonexpansive retract}. Bruck showed that if a nonexpansive map on a Banach space satisfies a conditional fixed point property, then its fixed point set is a nonexpansive retract \cite{Bruck73}. Later, Nussbaum showed that the fixed point set of a Thompson metric nonexpansive map is a nonexpansive retract under certain compactness assumptions \cite[Theorem 4.7 and Corollary 4.1]{Nussbaum88}. Here we use Theorem \ref{thm:Krasnoselskii} to give a simple proof that the fixed point set of a condensing Thompson metric nonexpansive map is a nonexpansive retract. 

\begin{corollary} \label{cor:contractible}
Let $C$ be a normal, closed cone with nonempty interior in a Banach space. If $f: \inter{C} \rightarrow \inter{C}$ is $d_T$-nonexpansive and either $\gamma$ or $\tau$-condensing, then $\Fix(f)$ is a nonexpansive retract of $\inter{C}$.  
\end{corollary}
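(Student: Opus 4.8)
The plan is to build the retraction directly from the Krasnoselskii iteration of Theorem \ref{thm:Krasnoselskii}. If $\Fix(f) = \emptyset$ the statement is vacuous, so I assume $f$ has a fixed point in $\inter{C}$. Fix any $0 < \alpha < 1$ and set $g = \alpha f + (1-\alpha)\id$. By Theorem \ref{thm:Krasnoselskii}, for every $x \in \inter{C}$ the sequence $g^n(x)$ converges in $(\inter{C}, d_T)$ to a fixed point of $f$. I would then define $r : \inter{C} \to \Fix(f)$ by $r(x) = \lim_{n \to \infty} g^n(x)$.

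Next I would verify that $r$ is a retraction onto $\Fix(f)$. The inclusion $r(\inter{C}) \subseteq \Fix(f)$ is exactly the content of Theorem \ref{thm:Krasnoselskii}. For a fixed point $x \in \Fix(f)$ we have $g(x) = \alpha f(x) + (1-\alpha) x = x$, so $g^n(x) = x$ for all $n$ and hence $r(x) = x$; thus $r$ restricts to the identity on $\Fix(f)$, which in turn gives surjectivity $r(\inter{C}) = \Fix(f)$.

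The remaining point is that $r$ is nonexpansive, and therefore continuous. By Lemma \ref{lem:falpha} the map $g$ is $d_T$-nonexpansive, so each iterate $g^n$ satisfies $d_T(g^n(x), g^n(y)) \le d_T(x,y)$ for all $x, y \in \inter{C}$. Since $g^n(x) \to r(x)$ and $g^n(y) \to r(y)$, and the Thompson metric is jointly continuous on $\inter{C} \times \inter{C}$, passing to the limit yields $d_T(r(x), r(y)) \le d_T(x,y)$. Hence $r$ is $d_T$-nonexpansive, in particular continuous, and so $\Fix(f)$ is a nonexpansive retract of $\inter{C}$.

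Given Theorem \ref{thm:Krasnoselskii}, I expect no serious obstacle: the construction of $r$ as the limit of Krasnoselskii iterates is forced by the convergence result, and the retraction identities are immediate from the fact that $g$ fixes $\Fix(f)$ pointwise. The only step requiring genuine care is the passage of the nonexpansiveness inequality through the pointwise limit, which rests on the joint continuity of $d_T$ on $\inter{C} \times \inter{C}$ (equivalently, on the equivalence of the Thompson and norm topologies for a normal cone). Everything else is routine verification.
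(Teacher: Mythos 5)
Your proposal is correct and is essentially identical to the paper's proof: the paper also defines the retraction $r$ as the pointwise limit of the Krasnoselskii iterates $x_{n+1} = \alpha f(x_n) + (1-\alpha)x_n$ furnished by Theorem \ref{thm:Krasnoselskii}, and asserts it is a nonexpansive retraction onto $\Fix(f)$. You have merely spelled out the routine verifications (that $g$ fixes $\Fix(f)$ pointwise and that nonexpansiveness survives the pointwise limit) which the paper leaves implicit.
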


\begin{proof}
Suppose that $\Fix(f)$ is nonempty. Let $r$ be the map which assigns to any $x \in \inter{C}$ the limit of the sequence $x_n$ defined by $x_0 = x$ and $x_{n+1} = \alpha f(x_n) + (1-\alpha) x_n$ with $0 < \alpha < 1$.  Then $r$ is a nonexpansive retraction and its range is $\Fix(f)$. 
\end{proof}

\section{Bounded fixed point sets of order-preserving maps} \label{sec:Order}

Let $X$ be a real Banach space with a partial order induced by a closed cone $C$. Let $D \subseteq X$. A function $f:D \rightarrow X$ is \emph{order-preserving} if $f(x) \le f(y)$ whenever $x \le y$. The function $f$ is \emph{homogeneous} if $f(tx) = tf(x)$ for all $t > 0$ and $x \in D$ and \emph{subhomogeneous} if $f(tx) \le tf(x)$ for all $t \ge 1$ and $x \in D$. 

If $C$ has nonempty interior and $f: \inter{C} \rightarrow \inter{C}$ is order-preserving, then $f$ is nonexpansive with respect to Thompson's metric if and only if $f$ is subhomogeneous (see e.g., \cite[Lemma 2.1.7]{LemmensNussbaum} where this is proved in finite dimensions, but the same proof applies to infinite dimensional cones as well).
If $f(x) \ge \alpha x$ for some $x \in \inter{C}$, then we will say that $x$ is a \emph{sub-eigenvector} of $f$ with \emph{sub-eigenvalue} $\alpha$.  Similarly, if $f(y) \le \beta y$ for some $y \in \inter{C}$, then $y$ is a \emph{super-eigenvector} of $f$ with \emph{super-eigenvalue} $\beta$.  

The \emph{upper Collatz-Wielandt number} for any function $f:\inter{C} \rightarrow C$ is
$$r(f) = \inf_{x \in \inter{C}} M(f(x)/x)$$
and the \emph{lower Collatz-Wielandt number} for a function $f:\inter{C} \rightarrow \inter{C}$ is 
$$\lambda(f) = \sup_{x \in \inter{C}} m(f(x)/x).$$
Observe that $r(f)$ is the infimum of the super-eigenvalues of $f$, and $\lambda(f)$ is the supremum of the sub-eigenvalues of $f$. 

The main result of this section is:

\begin{theorem} \label{thm:bounded}
Let $C$ be a normal, closed cone with nonempty interior in a Banach space. Let $f: \inter{C} \rightarrow \inter{C}$ be order-preserving and subhomogeneous. If $C$ is regular or if $f$ is $\gamma$ or $\tau$-condensing, then the following are equivalent. 
\begin{enumerate}[(a)]
\item \label{item:Fix} $\Fix(f)$ is nonempty and bounded in $(\inter{C}, d_T)$.  
\item \label{item:illum} There exist $x, y \in \inter{C}$ such that $f(x) \gg x$ and $f(y) \ll y$. 
\item \label{item:lambdaR} $\lambda(f) > 1$ and $r(f)< 1$.  
\end{enumerate}
\end{theorem}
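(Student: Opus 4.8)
The plan is to run the cycle (c) $\Leftrightarrow$ (b), then (c) $\Rightarrow$ (a), and finally the substantive implication (a) $\Rightarrow$ (c). First I would record the dictionary between the strict order relations and the numbers $m, M$: for $u, v \in \inter{C}$ one has $v \gg u$ if and only if $m(v/u) > 1$, and $v \ll u$ if and only if $M(v/u) < 1$. Both follow from openness of $\inter{C}$ together with the fact that the sum of an element of $C$ and an element of $\inter{C}$ lies in $\inter{C}$. Applying this pointwise, the existence of $x$ with $f(x) \gg x$ is exactly $\sup_x m(f(x)/x) = \lambda(f) > 1$, and the existence of $y$ with $f(y) \ll y$ is exactly $\inf_y M(f(y)/y) = r(f) < 1$, which gives (b) $\Leftrightarrow$ (c) with no further work.

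For (c) $\Rightarrow$ (a) I would first build a bounded invariant region. Given $x$ with $f(x) \gg x$ and $y$ with $f(y) \ll y$, subhomogeneity lets me scale: for $0 < s \le 1$ one has $f(sx) \ge s f(x) \gg sx$, and for $s' \ge 1$ one has $f(s'y) \le s' f(y) \ll s' y$, so the strict inequalities survive scaling. Since $x,y$ lie in the single part $\inter{C}$ they are comparable, so I can choose scalars with $x \le s'y$; then $[x, s'y]$ is closed, convex, contained in $\inter{C}$, $d_T$-bounded, and $f$-invariant because $f(x) \ge x$ and $f(s'y) \le s'y$. A fixed point then follows from monotone iteration when $C$ is regular (the decreasing orbit of $s'y$ converges, and its limit stays $\gg 0$ since it dominates $x$) and from the Darbo--Sadovski\u{\i} theorem \cite[Theorem 9.1]{Deimling}, respectively \cite[Theorem 4.1]{HeKu14}, when $f$ is $\gamma$- or $\tau$-condensing. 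For boundedness, let $z$ be any fixed point and $\mu = M(x/z)$. If $\mu \ge 1$ then subhomogeneity gives $\alpha x \le f(x) \le f(\mu z) \le \mu f(z) = \mu z$ with $\alpha > 1$, forcing $M(x/z) \le \mu/\alpha < \mu$, a contradiction; hence $x \ll z$, and symmetrically $z \ll y$. Thus every fixed point lies in the fixed $d_T$-bounded interval $[x,y]$, so $\Fix(f)$ is bounded.

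The hard direction is (a) $\Rightarrow$ (c). Here I would argue by contradiction: a fixed point $z_0$ always forces $\lambda(f) \ge 1 \ge r(f)$, so assuming $\Fix(f)$ nonempty and bounded, it suffices to rule out $r(f) = 1$ (the case $\lambda(f) = 1$ being symmetric). The elementary moves are clear: for $t > 1$, subhomogeneity gives $f(tz_0) \le tz_0$, and boundedness forces $tz_0 \notin \Fix(f)$ for large $t$, so $f(tz_0) < tz_0$ strictly; iterating downward then produces further fixed points. The main obstacle is precisely the gap between the weak inequality $f(w) \le w$, $f(w) \ne w$, and the strict inequality $f(w) \ll w$ needed to certify $r(f) < 1$: in a general cone a finite sum of boundary vectors can be interior, so one cannot upgrade ``non-strict at every step'' to ``strict somewhere'' by telescoping a decreasing orbit. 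This is the step I expect to be genuinely delicate.

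To get past this I would pass to the recession map $\hat f(x) = \lim_{t \to \infty} f(tx)/t$, whose existence as a monotone (hence convergent, by regularity or by a compactness argument in the condensing case) limit is where the standing hypotheses enter. The map $\hat f : \inter{C} \to C$ is homogeneous and order-preserving, and since $M(f(tx)/(tx))$ is non-increasing in $t$ with limit $M(\hat f(x)/x)$, a short computation gives $r(\hat f) = r(f)$. Assuming $r(f) = 1$ then yields $r(\hat f) = 1$, and for a homogeneous map a Collatz--Wielandt/Krein--Rutman argument under the compactness hypothesis should produce a nonzero $v \in C$ with $\hat f(v) \ge v$. This critical direction is what drives the fixed points of $f$ off to infinity; extracting an honest unbounded family of fixed points of $f$ from the eigen-data of $\hat f$, while controlling boundary behavior through supporting functionals in the spirit of the proof of Theorem \ref{thm:Krasnoselskii}, is the technical heart of the argument.
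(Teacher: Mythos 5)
Your treatment of the easy implications is correct: the dictionary $v \gg u \Leftrightarrow m(v/u) > 1$ and $v \ll u \Leftrightarrow M(v/u) < 1$ does give \ref{item:illum}$\Leftrightarrow$\ref{item:lambdaR}, and your \ref{item:lambdaR}$\Rightarrow$\ref{item:Fix} argument is sound (the invariant interval plus monotone iteration or Darbo-Sadovski\u{\i}/\cite[Theorem 4.1]{HeKu14}, and your $M(x/z)$ contradiction is exactly the paper's Lemma \ref{lem:subeig}). The genuine gap is \ref{item:Fix}$\Rightarrow$\ref{item:illum}: you reduce it to ruling out $r(f)=1$ (and $\lambda(f)=1$), propose recession maps, and then explicitly leave open ``extracting an honest unbounded family of fixed points of $f$ from the eigen-data of $\hat f$'' --- but that step \emph{is} the theorem; nothing in the proposal closes it. Moreover, the sketched route has concrete obstructions. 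First, existence of $\hat f = f_\infty$ is only guaranteed when $C$ is regular (Proposition \ref{prop:recession}); in the $\gamma$- or $\tau$-condensing cases the ray $\{tx : t \ge 1\}$ is unbounded in both norm and $d_T$ and the scalars $t^{-1}$ vary with the point, so neither condensing hypothesis yields the compactness you invoke for convergence of $t^{-1}f(tx)$. Second, even granting a vector $v \in C \setminus \{0\}$ with $\hat f(v) \ge v$ (which itself needs justification, since $\hat f$ maps $\inter{C}$ only into $C$ and need not be continuous), such a $v$ will typically lie on $\partial C$, and no mechanism is offered to convert it into fixed points of $f$ at arbitrarily large $d_T$-distance from $z_0$. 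Third, the case $\lambda(f)=1$ is not symmetric for this route: the analogous limit $\lim_{t \to 0^+} t^{-1}f(tx)$ is increasing and need not exist, and an order-reversing isometry that would flip one case into the other exists (in finite dimensions) only for symmetric cones, by Walsh's theorem cited before Lemma \ref{lem:recession2}.

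The idea you are missing is that the paper proves \ref{item:Fix}$\Rightarrow$\ref{item:illum} by a \emph{perturbation} argument, not a recession/compactification argument, and it uses boundedness of $\Fix(f)$ quantitatively through Lemma \ref{lem:contraction}: choosing $u \in \Fix(f)$ and $R > r > 0$ with $\Fix(f) \subset B_r(u)$, there is $k$ with $f^k(\cl{B_R(u)}) \subset B_r(u)$. Setting $\epsilon = e^{(R-r)/k}-1$, one perturbs $f$ to $g = (1+\epsilon)^{-1}f$ and $h = f + \epsilon u$; the estimate from Lemma \ref{lem:contraction} shows the decreasing orbit $g^j(u)$ stays above $e^{-R}u$ and the increasing orbit $h^j(u)$ stays below $e^{R}u$, so by Lemma \ref{lem:regular} they converge to fixed points $x$ of $g$ and $y$ of $h$ respectively. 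These satisfy $f(x) = (1+\epsilon)x \gg x$ and $f(y) = y - \epsilon u \ll y$, which is precisely \ref{item:illum}. In other words, the strict inequalities you correctly identified as the crux are manufactured by perturbing $f$ and using boundedness of the fixed point set to keep the perturbed orbits under control, rather than extracted from a homogeneous limit map.
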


Before proving Theorem \ref{thm:bounded}, we gather a few minor lemmas.
The first lemma is known (see e.g., \cite[Theorem 19.1]{Deimling}). The proof is simple, so we include it here for convenience.

\begin{lemma} \label{lem:regular}
Let $C$ be a normal, closed cone in a Banach space $X$.  Let $D$ be a closed subset of $X$ and let $f:D \rightarrow D$ be order-preserving and continuous. Assume either that $C$ is regular or $f$ is $\gamma$ or $\tau$-condensing. If $f^k(x) \in D$ is an increasing (or decreasing) sequence that is bounded above (below) by $y \in D$, then $f^k(x)$ converges to a fixed point of $f$ in $D$.  In particular, if $[x,y] \subseteq D$ is a nonempty order interval and $f([x,y]) \subseteq [x,y]$, then $f$ has a fixed point in $[x,y]$.  
\end{lemma}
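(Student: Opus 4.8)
The plan is to split the statement into two parts: first the convergence of a bounded monotone orbit to a fixed point, and then the order-interval conclusion as an immediate corollary. For the monotone-orbit claim I would argue separately in the two hypotheses, since the mechanism forcing convergence differs in the regular and the condensing cases; in both, however, the identification of the limit as a fixed point is the same. Once I know $f^k(x) \to z$, closedness of $D$ gives $z \in D$, and continuity of $f$ gives $f(z) = f(\lim_k f^k(x)) = \lim_k f^{k+1}(x) = z$. Throughout I treat the increasing case, the decreasing one being symmetric, so assume $f^k(x)$ is increasing with $f^k(x) \le y$ for all $k$.

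In the regular case I would take the short route: set $w_k = y - f^k(x)$. Monotonicity of $f^k(x)$ makes $w_k$ decreasing, and $f^k(x) \le y$ makes each $w_k \in C$, so $w_k$ is a decreasing sequence in $C$. By the definition of regularity $w_k$ converges, hence so does $f^k(x)$. I expect no difficulty here.

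In the condensing case the orbit $A = \{f^k(x) : k \ge 0\}$ lies in the order interval $[x,y] = (x+C)\cap(y-C)$, which is norm-bounded because $C$ is normal and (in the $\tau$-setting, where $D \subseteq \inter{C}$, so the endpoints are interior points) $d_T$-bounded, since $z \ge x \gg 0$ and $z \le y$ give $d_T(z,x) \le \log M(y/x)$. Writing $A = \{x\} \cup f(A)$ and noting that adjoining a single point leaves Kuratowski's measure unchanged (directly from the cover definition, as the singleton has diameter $0$), I obtain $\gamma(A) = \gamma(f(A))$, respectively $\tau(A) = \tau(f(A))$; were this common value positive it would contradict the condensing hypothesis, so it is $0$ and $\cl{A}$ is compact by (P1). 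It then remains to upgrade a convergent subsequence to convergence of the whole monotone sequence: if $f^{k_j}(x) \to z$, then $z$ is an upper bound for the orbit (the cone is closed, so $\le$ passes to the limit), and for $k \ge k_j$ one has $0 \le z - f^k(x) \le z - f^{k_j}(x)$, whence normality gives $\|z - f^k(x)\| \le \kappa \|z - f^{k_j}(x)\|$, forcing $f^k(x) \to z$.

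For the ``in particular'' statement I would apply the first part to the orbit started at the lower endpoint: $f([x,y]) \subseteq [x,y]$ gives $f(x) \ge x$, order-preservation then makes $f^k(x)$ increasing and bounded above by $y$, so it converges to a fixed point $z$, and since $[x,y]$ is closed and contains every $f^k(x)$, the limit $z$ lies in $[x,y]$. The only genuinely delicate point I anticipate is the condensing case: one must be careful that the orbit is $d_T$-bounded in the $\tau$-variant (which is exactly why the interior assumption $D \subseteq \inter{C}$ is needed there), and the passage from a convergent subsequence to convergence of the full monotone sequence must be justified via normality as above. Everything else is routine.
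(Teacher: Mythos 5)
Your proposal is correct and follows essentially the same route as the paper's proof: regularity (applied to the decreasing sequence $y - f^k(x)$) in one case, compactness of the orbit closure via the measure of noncompactness and property (P1) in the other, then monotonicity plus continuity to identify the limit as a fixed point, and the order-interval claim as an immediate corollary starting from $f(x) \ge x$. The only difference is that you spell out steps the paper leaves implicit (the $A = \{x\} \cup f(A)$ argument, the $d_T$-boundedness of the orbit in the $\tau$ case, and the normality argument upgrading subsequential to full convergence), all of which are correct.
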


\begin{proof}
If $C$ is regular, then the sequence $f^k(x)$ converges by definition. If $f$ is either $\gamma$ or $\tau$-condensing, then $\{f^k(x) : k \in \N\}$ has compact closure by property (P1) and therefore $f^k(x)$ has a limit point in $D$. Since $f^k(x)$ is increasing (or decreasing), it converges to that limit point. In either case, the continuity of $f$ guarantees that the limit of $f^k(x)$ is a fixed point. If $[x,y] \subseteq D$ is a nonempty order-interval and $f([x,y]) \subseteq [x,y]$, it follows that $f(x) \ge x$ and therefore $f^k(x)$ is an increasing sequence bounded above by $y$.  Therefore $f^k(x)$ converges to a fixed point in $[x,y]$. 
\end{proof}


\begin{lemma} \label{lem:contraction}
Let $C$ be a normal, closed cone with nonempty interior in a Banach space. Let $f: \inter{C} \rightarrow \inter{C}$ be order-preserving and subhomogeneous. In addition, suppose either that $C$ is regular or $f$ is $\gamma$ or $\tau$-condensing.  If $u$ is a fixed point of $f$ and $\Fix(f)$ is bounded in Thompson's metric, then for any $R > r > 0$ such that $\Fix(f) \subset B_r(u)$, there is a $k \in \N$ large enough so that 
$$f^k(\cl{B_R(u)}) \subset B_r(u).$$
In particular, if $f$ has a unique fixed point $u$, then $f^k(x)$ converges to $u$ for all $x \in \inter{C}$.
\end{lemma}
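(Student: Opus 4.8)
The plan is to exploit the order structure of the problem. Since $\cl{B_R(u)} = [e^{-R}u, e^R u]$ is an order interval and $f$ is order-preserving, the orbit of any $x \in \cl{B_R(u)}$ is squeezed between the orbits of the two endpoints $e^{R}u$ and $e^{-R}u$. So I would first analyze these two endpoint orbits, show they converge to fixed points lying in $B_r(u)$, and then transfer that control to the whole ball via a monotonicity estimate for Thompson's metric.

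For the endpoints, subhomogeneity with $t = e^R \ge 1$ and $x=u$ gives $f(e^R u) \le e^R f(u) = e^R u$, so $q_k := f^k(e^R u)$ is decreasing (order-preservation), and it is bounded below by $u = f^k(u)$ since $e^R u \ge u$. Dually, applying subhomogeneity to $x = e^{-R}u$ and $t = e^R$ gives $u = f(u) \le e^R f(e^{-R}u)$, i.e. $f(e^{-R}u) \ge e^{-R}u$, so $p_k := f^k(e^{-R}u)$ is increasing and bounded above by $u$. The same inequalities show $f$ maps the order interval $\cl{B_R(u)} = [e^{-R}u, e^R u]$, which is a closed subset of $\inter{C}$, into itself, so Lemma \ref{lem:regular} applies and $q_k, p_k$ converge to fixed points $a_\infty, b_\infty \in \Fix(f)$. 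This is where the boundedness hypothesis enters decisively: because $\Fix(f) \subset B_r(u)$, both limits satisfy $d_T(a_\infty,u) < r$ and $d_T(b_\infty,u) < r$.

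To pass from the endpoints to the whole ball, I would use the following elementary estimate. For $x \in \cl{B_R(u)}$, order-preservation gives $p_k \le f^k(x) \le q_k$, while $p_k \le u \le q_k$. Now for any $z$ with $p_k \le z \le q_k$ one has $M(z/u) \le M(q_k/u)$ and $M(u/z) \le M(u/p_k)$; since $q_k \ge u \ge p_k$, these right-hand sides equal $e^{d_T(q_k,u)}$ and $e^{d_T(p_k,u)}$ respectively, so $d_T(z,u) \le \max\{d_T(p_k,u), d_T(q_k,u)\}$. As $d_T(q_k,u) \to d_T(a_\infty,u) < r$ and $d_T(p_k,u) \to d_T(b_\infty,u) < r$, choosing $k$ large forces this maximum below $r$, yielding $d_T(f^k(x),u) < r$ uniformly in $x$, hence $f^k(\cl{B_R(u)}) \subset B_r(u)$. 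For the final claim, when $u$ is the unique fixed point, $\Fix(f) = \{u\} \subset B_r(u)$ for every $r>0$; given $x \in \inter{C}$ I would pick $R > d_T(x,u)$ so $x \in \cl{B_R(u)}$, and since $f$ is $d_T$-nonexpansive the sequence $d_T(f^n(x),u)$ is non-increasing and drops below every $r>0$, so $f^n(x) \to u$.

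The main obstacle I expect is not the monotone convergence of the endpoint orbits, which is handed to us by Lemma \ref{lem:regular}, but the fact that the two limits $a_\infty, b_\infty$ need not equal $u$ — they are only pinned down by the hypothesis $\Fix(f) \subset B_r(u)$. Converting this one-sided control at the endpoints into uniform control over the entire order interval $[p_k, q_k]$ is the crux, and it hinges on the monotonicity inequality $d_T(z,u) \le \max\{d_T(p_k,u), d_T(q_k,u)\}$ together with the observation that $p_k \le u \le q_k$ sandwiches $u$ itself. I would take care to verify that these two endpoint comparisons hold at every stage of the iteration, since that is what guarantees the sandwich $p_k \le f^k(x) \le q_k$ persists for all $k$.
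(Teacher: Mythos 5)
Your proof is correct and follows essentially the same route as the paper's: exploit $\cl{B_R(u)} = [e^{-R}u, e^Ru]$, use subhomogeneity to get $f(e^{-R}u) \ge e^{-R}u$ and $f(e^Ru) \le e^Ru$, invoke Lemma \ref{lem:regular} so the endpoint orbits converge to fixed points lying in $B_r(u)$, and sandwich $f^k(x)$ between them by order-preservation. You merely make explicit two steps the paper leaves implicit — the Thompson-metric estimate showing an order interval with endpoints in $B_r(u)$ stays in $B_r(u)$, and the nonexpansiveness argument for the final convergence claim.
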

\begin{proof}
Recall by \eqref{balls} that $\cl{B_R(u)} = [e^{-R} u, e^R u]$. Since $u$ is a fixed point and $f$ is subhomogeneous, it follows that $f(e^{-R}u) \ge e^{-R}u$ and $f(e^R u) \le e^R u$.  By Lemma \ref{lem:regular}, the sequences $f^j(e^{-R}u)$ and $f^j(e^R u)$ converge to fixed points of $f$.  Therefore we can choose a $k$ large enough so that both $f^k(e^{-R}u)$ and $f^k(e^R u)$ are contained in $B_r(u)$. Since $f^k$ is order-preserving, it follows that
$$f^k(\cl{B_R(u)}) = f^k([e^{-R} u, e^R u]) \subseteq [f^k(e^{-R} u), f^k(e^R u)]$$
which is contained in $B_r(u).$
\end{proof}

\begin{remark}
The observation in Lemma \ref{lem:contraction} that if $f$ has a unique fixed point $u \in \inter{C}$, then $f^k(x)$ converges to $u$ for all $x \in \inter{C}$ was made in \cite[Theorem 6.6]{Krasnoselskii64}, although with the stronger assumption that $f$ is compact when $C$ is not regular.
\end{remark}

This next lemma shows that there is an order relation between sub-eigenvectors and super-eigenvectors. 
\begin{lemma} \label{lem:subeig}
Let $C$ be a closed cone with nonempty interior in a Banach space.  Let $f: \inter{C} \rightarrow \inter{C}$ be order-preserving and subhomogeneous. If $f(x) \ge \alpha x$ and $f(y) \le \beta y$ where $\alpha > \beta$, then $x \ll y$.   
\end{lemma}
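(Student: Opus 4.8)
The plan is to reduce the claim to producing a scalar $t < 1$ with $x \le ty$. Indeed, if such a $t$ exists, then
\[
y - x = (1-t)y + (ty - x),
\]
where the first summand lies in $\inter{C}$ (since $1 - t > 0$ and $y \in \inter{C}$) and the second lies in $C$ (since $x \le ty$); as the sum of an interior point and a point of $C$ is again interior, this gives $y - x \in \inter{C}$, i.e., $x \ll y$. So the whole proof rests on producing that $t$.

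The natural candidate is $t := M(x/y) = \inf\{\beta > 0 : x \le \beta y\}$. Because $x$ and $y$ lie in the common part $\inter{C}$ this infimum is finite; because $x \neq 0$ and $C$ is closed it is strictly positive (were $t = 0$, letting $\beta \downarrow 0$ in $x \le \beta y$ and using closedness of $C$ would force $x \le 0$, hence $x = 0$); and because $C$ is closed the infimum is attained, so $x \le ty$. The heart of the matter is to show $t < 1$, which I would prove by contradiction. Suppose $t \ge 1$. Applying the order-preserving map $f$ to $x \le ty$ gives $f(x) \le f(ty)$, and subhomogeneity --- which requires precisely $t \ge 1$ --- gives $f(ty) \le t f(y)$. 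Combining these with the hypotheses $\alpha x \le f(x)$ and $f(y) \le \beta y$ yields
\[
\alpha x \le f(x) \le f(ty) \le t f(y) \le t\beta y,
\]
so $x \le (t\beta/\alpha)\, y$. Since $\alpha > \beta$ and $t > 0$, the coefficient satisfies $t\beta/\alpha < t$, contradicting the minimality of $t = M(x/y)$. Hence $t < 1$, and the reduction above finishes the proof.

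The step I expect to demand the most care is this contradiction, because it is where the two hypotheses do their work: $\alpha > \beta$ enters only to make $t\beta/\alpha$ strictly less than $t$, while the assumption $t \ge 1$ is exactly what licenses applying subhomogeneity to $f(ty)$. The argument is therefore engineered so that $f$ is applied to a scaling $ty$ with $t \ge 1$. The remaining points --- positivity and attainment of $M(x/y)$, and the fact that an interior point plus a cone point is interior --- are routine consequences of $x \neq 0$ and the closedness of $C$.
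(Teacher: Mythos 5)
Your proof is correct and is essentially the paper's argument in mirror image: the paper assumes $y - x \notin \inter{C}$, takes the \emph{maximal} $t \le 1$ with $tx \le y$, and improves it by the factor $\alpha/\beta > 1$ via the chain $\beta y \ge f(y) \ge f(tx) \ge tf(x) \ge t\alpha x$, whereas you take the \emph{minimal} $t$ with $x \le ty$ and shrink it by $\beta/\alpha < 1$ using subhomogeneity directly. The key mechanism --- an extremal scaling constant that the hypotheses force to improve by $\alpha/\beta$, yielding a contradiction --- is the same in both.
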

\begin{proof}
Suppose that $y - x \notin \inter{C}$.  Then there is a maximal $0 < t \le 1$ such that $y - tx \in C$.  For that $t$, 
$$\beta y \ge f(y) \ge f(tx) \ge t f(x) \ge t \alpha x.$$
Therefore $y - (\alpha/\beta) tx \in C$ which contradicts the maximality of $t$. 
\end{proof}

%

\begin{proof}[Proof of Theorem \ref{thm:bounded}]
\ref{item:Fix}$\Rightarrow$\ref{item:illum}. Choose $u \in \Fix(f)$. Choose any $R > r > 0$ with $r$ large enough so that $\Fix(f)$ is contained in the open Thompson metric ball $B_r(u)$. By Lemma \ref{lem:contraction}, there exists $k \in \N$ such that $f^k(\cl{B_R(u)}) \subset B_r(u)$.
Let 
$g = (1+\epsilon)^{-1} f \text{ and }h = f + \epsilon u$
where $\epsilon = e^{(R-r)/k} - 1$.  Both $g$ and $h$ are order-preserving, subhomogeneous functions on $\inter{C}$. If $f$ is $\gamma$-condensing, then so are both $g$ and $h$ by property (P2) of $\gamma$. Similarly, if $f$ is $\tau$-condensing, then so are $g$ and $h$ by property (P3) of $\tau$. 
A quick induction argument shows that 
$$g^k(x) \ge (1+\epsilon)^{-k} f^k(x) \ge e^{r-R} f^k(x)$$
for all $x \in \inter{C}$.  Since $h(x) = f(x) + \epsilon u = f(x) + \epsilon f(u) \le (1+\epsilon) f(x)$ for all $x \ge u$, we can use a similar induction argument to show that
$$h^k(x) \le (1+\epsilon)^k f^k(x) \le e^{R-r} f^k(x)$$
for all $x \ge u$. In particular, these inequalities imply that 
$$g^k(x) \ge e^{r-R} f^k(e^{-R} u) \ge e^{-R} u$$
when $x \ge e^{-R}u$, and 
$$h^k(x) \le e^{R-r} f^k(e^{R} u) \le e^R u$$
when $u \le x \le e^{R} u$.  

By the above inequalities, $g^{jk}(u) \ge e^{-R}u$ for all $j \in \N$.  Since the sequence $g^j(u)$ is decreasing, it follows that $g^j(u) \ge e^{-Ru}$ for all $j \in \N$. Therefore $g^j(u)$ converges to a fixed point $x$ of $g$ in $[e^{-R} u, u]$ by Lemma \ref{lem:regular}.  Similarly $h^{jk}(u) \le e^R u$ for all $j \in \N$. Since $h^j(u)$ is increasing, it must be bounded above by $e^R u$, and so it converges to a fixed point $y$ of $h$ in $[u, e^R u]$.  
Then $f(x) = (1+\epsilon)x \gg x$ and $f(y) = y - \epsilon u \ll y$.

\ref{item:illum}$\Rightarrow$\ref{item:Fix}. If $f(x) \gg x$ and $f(y) \ll y$, then there exist $\alpha > 1$ and $\beta < 1$ such that $f(x) \ge \alpha x$ and $f(y) \le \beta y$. So $y \gg x$ by Lemma \ref{lem:subeig}.  Since $f([x, y]) \subset [x,y]$, Lemma \ref{lem:regular} implies that $f$ has a fixed point in $[x, y]$. In fact, all fixed points of $f$ are contained in $[x,y]$ by Lemma \ref{lem:subeig} since every fixed point is both a super and sub-eigenvector of $f$ with eigenvalue one. Therefore $\Fix(f)$ is nonempty and bounded in Thompson's metric.
 
\ref{item:illum}$\Rightarrow$\ref{item:lambdaR}.  This follows immediately from the definition of $\lambda(f)$ and $r(f)$. 

\ref{item:lambdaR}$\Rightarrow$\ref{item:illum}. For any $\epsilon > 0$, there exists $x, y \in \inter{C}$ such that 
$$f(x) \ge (\lambda(f)-\epsilon)x ~~~~ \text{ and } ~~~~ f(y) \le (r(f)+\epsilon) y.$$ 
If $\lambda(f) > 1$ and $r(f) < 1$, then we can choose $\epsilon$ small enough so that $\lambda(f)-\epsilon > 1$ and $r(f) + \epsilon < 1$. Then $f(x) \gg x$ and $f(y) \ll y$.  
\end{proof}

The next result about the spectrum of order-preserving subhomogeneous maps follows immediately by applying Theorem \ref{thm:bounded} to $\mu^{-1} f$. Note that Krasnoselskii observed that the eigenvalues of a compact, order-preserving, subhomogeneous map form a continuous interval in \cite[Section 6.2]{Krasnoselskii64}.  

\begin{corollary} \label{cor:spectrum}
Let $C$ be a normal closed cone with nonempty interior in a Banach space. Let $f: \inter{C} \rightarrow \inter{C}$ be order-preserving and subhomogeneous. In addition, assume one of the following: (i) $C$ is regular, (ii) $\lambda(f) f$ is $\gamma$-condensing, or (iii) $f$ is $\tau$-condensing.  If $r(f) < \lambda(f)$, then for every $r(f) < \mu < \lambda(f)$, the set of eigenvectors $\{x \in \inter{C} : f(x) = \mu x \}$ is nonempty and bounded in $(\inter{C},d_T)$.  
\end{corollary}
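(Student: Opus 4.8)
The plan is to apply Theorem~\ref{thm:bounded} to the rescaled map $g = \mu^{-1} f$. First I would record that $g$ inherits the structural hypotheses of that theorem: since $\mu^{-1} > 0$ and $f$ maps $\inter{C}$ into itself, so does $g$; moreover $g$ is order-preserving because $f$ is, and $g$ is subhomogeneous since $g(tx) = \mu^{-1} f(tx) \le \mu^{-1} t f(x) = t g(x)$ for every $t \ge 1$. The point of the rescaling is that $x$ is a fixed point of $g$ exactly when $f(x) = \mu x$, so $\Fix(g)$ is precisely the set of eigenvectors named in the statement. It therefore suffices to show that $\Fix(g)$ is nonempty and bounded in $(\inter{C}, d_T)$.

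Next I would translate the hypothesis $r(f) < \mu < \lambda(f)$ into condition~\ref{item:lambdaR} of Theorem~\ref{thm:bounded} for $g$. Because $M(\cdot/x)$ and $m(\cdot/x)$ are positively homogeneous in their first argument, we have $M(g(x)/x) = \mu^{-1} M(f(x)/x)$ and $m(g(x)/x) = \mu^{-1} m(f(x)/x)$ for every $x \in \inter{C}$. Taking the infimum and the supremum over $\inter{C}$ then gives $r(g) = \mu^{-1} r(f)$ and $\lambda(g) = \mu^{-1} \lambda(f)$. Consequently $r(g) < 1$ is equivalent to $r(f) < \mu$, and $\lambda(g) > 1$ is equivalent to $\mu < \lambda(f)$, so the assumption $r(f) < \mu < \lambda(f)$ is exactly the assertion that $g$ satisfies condition~\ref{item:lambdaR}.

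It then remains to verify, under each of the three stated hypotheses, that $g$ meets the compactness requirement of Theorem~\ref{thm:bounded}. Under (i) the cone $C$ is regular, a property of $C$ unaffected by the rescaling, so there is nothing to check. Under (iii) the map $f$ is $\tau$-condensing; since property~(P3) gives $\tau(cA) = \tau(A)$ for every $c > 0$, we have $\tau(g(A)) = \tau(\mu^{-1} f(A)) = \tau(f(A)) < \tau(A)$ whenever $\tau(A) > 0$, so $g$ is $\tau$-condensing. Under (ii) the map $\lambda(f) f$ is $\gamma$-condensing, and I would write $g = \mu^{-1} f = (\mu\lambda(f))^{-1}\,(\lambda(f) f)$, exhibiting $g$ as the scalar multiple of a $\gamma$-condensing map by the factor $(\mu\lambda(f))^{-1}$; by the seminorm identity $\gamma(\lambda A) = |\lambda|\gamma(A)$ in property~(P2), such a multiple is again $\gamma$-condensing once the factor is seen to be at most $1$. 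With the structural hypotheses and one of these compactness conditions in hand, Theorem~\ref{thm:bounded} gives that $\Fix(g)$ is nonempty and bounded in $(\inter{C}, d_T)$, which is the desired conclusion.

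The reduction is short, so the difficulty is one of careful bookkeeping rather than of mathematical substance: one must track how the positive rescaling by $\mu^{-1}$ interacts with $M$, $m$, and the two measures of noncompactness. The one point that genuinely requires attention is the $\gamma$-condensing transfer in case (ii); this is why the hypothesis is imposed on the rescaled map $\lambda(f) f$ rather than on $f$ itself, since it is the factor $(\mu\lambda(f))^{-1}$ that must be controlled before property~(P2) will pass the condensing property along to $g$. In contrast, the $\tau$-condensing case is entirely scale-invariant and the regular case is automatic, so those two clauses present no obstacle.
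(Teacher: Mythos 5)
Your reduction is exactly the paper's (its entire proof is the one sentence that the corollary ``follows immediately by applying Theorem~\ref{thm:bounded} to $\mu^{-1}f$''), and most of your bookkeeping is correct: $\Fix(\mu^{-1}f)$ is the eigenvector set, the identities $r(\mu^{-1}f)=\mu^{-1}r(f)$ and $\lambda(\mu^{-1}f)=\mu^{-1}\lambda(f)$ correctly convert $r(f)<\mu<\lambda(f)$ into condition~\ref{item:lambdaR} of Theorem~\ref{thm:bounded}, and cases (i) and (iii) are handled completely (regularity is untouched by rescaling, and $\tau$ is scale-invariant by (P3)). The gap is in case (ii). You write $\mu^{-1}f=(\mu\lambda(f))^{-1}\bigl(\lambda(f)f\bigr)$ and say this is $\gamma$-condensing ``once the factor is seen to be at most $1$,'' but you never verify that $(\mu\lambda(f))^{-1}\le 1$, i.e.\ that $\mu\ge 1/\lambda(f)$. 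This is not a consequence of the hypotheses: the corollary allows every $\mu\in(r(f),\lambda(f))$, and nothing forces $r(f)\lambda(f)\ge 1$. Even in one dimension, the order-preserving subhomogeneous map $f(x)=x/(1+x)$ on $\R_{>0}$ has $r(f)=0$ and $\lambda(f)=1$, so every admissible $\mu$ violates $\mu\lambda(f)\ge 1$. (That example does not contradict the corollary, since $\gamma$-condensing is vacuous in finite dimensions; it only shows your needed inequality does not follow from $\mu>r(f)$.) When $(\mu\lambda(f))^{-1}>1$, property (P2) gives only $\gamma(\mu^{-1}f(A))<(\mu\lambda(f))^{-1}\gamma(A)$, which is weaker than the condensing inequality, so your argument as written covers only $\mu\in[1/\lambda(f),\lambda(f))$. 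Your closing remark that the factor ``must be controlled'' concedes exactly the point at issue without supplying the control.

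To be fair, this difficulty is inherited from the statement rather than created by you: hypothesis (ii) together with the seminorm property (P2) yields that $\mu^{-1}f$ is $\gamma$-condensing precisely when $\mu\ge 1/\lambda(f)$, so the paper's own one-line proof has the same limitation, and it covers the full interval $(r(f),\lambda(f))$ exactly when $r(f)\lambda(f)\ge 1$. A hypothesis calibrated to the scaling argument for every admissible $\mu$ would be that $cf$ is $\gamma$-condensing for every $c<r(f)^{-1}$ (or, as in case (iii), any scale-invariant condensing assumption). A correct write-up should therefore do one of the following: prove $\mu\lambda(f)\ge 1$ under an additional assumption, restrict the range of $\mu$ in case (ii) to $[1/\lambda(f),\lambda(f))$, or explicitly flag that hypothesis (ii) as stated does not suffice for the scaling transfer on all of $(r(f),\lambda(f))$. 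As it stands, the unverified inequality is a genuine hole in case (ii) of your proof.
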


\begin{remark}
The conditions of Theorem \ref{thm:bounded} are never satisfied if the map $f$ is also homogeneous.  If a homogeneous map has a fixed point $x \in \inter{C}$, then $\Fix(f)$ cannot be bounded since it contains the ray $\{tx : t > 0\}$. Note also that $\lambda(f) \le r(f)$ whenever $f$ is homogeneous. Therefore Theorem \ref{thm:bounded} does not help determine when order-preserving homogeneous maps have a fixed point (or eigenvector) in the interior of a cone.  For more information on that topic, see \cite{LemmensNussbaum} or for some recent results for order-preserving homogeneous maps on the standard cone $\R^n_{\ge 0}$, see \cite{Lins22}.   
\end{remark}

\begin{remark}
One might wonder if it is possible to give similar necessary and sufficient conditions for $\Fix(f)$ to be nonempty and bounded without the assumption that $f$ is order-preserving. The answer is yes for the standard cone $\R^n_{\ge 0}$, however the conditions are somewhat more complicated. The entrywise logarithm function is an isometry from $(\R^n_{> 0},d_T)$ onto $\R^n$ with the supremum norm $\|x\|_\infty = \max_{1 \le i \le n} |x_i|$. Necessary and sufficient conditions for the fixed point set of a nonexpansive map on a finite dimensional normed space to be bounded and nonempty are given in \cite{LLN16}. Verifying that a $\|\cdot\|_\infty$-nonexpansive map on $\R^n$ has a nonempty bounded fixed point set requires confirming $2^n$ inequalities similar to condition \ref{item:illum} of Theorem \ref{thm:bounded}. It is an open question whether similar conditions could be given for $d_T$-nonexpansive maps on other cones. The fact that condition \ref{item:illum} of Theorem \ref{thm:bounded} only requires confirming two inequality conditions demonstrates how strong the order-preserving property is. 
\end{remark}


Theorem \ref{thm:bounded} raises the question of how to compute the upper and lower Collatz-Wielandt numbers of an order-preserving, subhomogeneous function $f$. Here we make some observations that can help.  

Let $C$ be a closed cone with nonempty interior in a Banach space $X$, and suppose that $f:\inter{C} \rightarrow \inter{C}$ is order-preserving and subhomogeneous. The \emph{recession map} of $f$ is the function defined by 
$$f_\infty(x) = \lim_{t \rightarrow \infty} t^{-1} f(tx)$$
for all values $x \in \inter{C}$ where the limit exists.  
The use of recession maps to study the eigenvalues of order-preserving subhomogeneous maps can be traced back at least as far as \cite[Theorem 6.11]{Krasnoselskii64}, although there the recession maps are required to be linear. See \cite{CaLiSt19} for a more recent example where the recession map is used to give conditions for the existence of fixed points of order-preserving subhomogeneous maps on $\R^n_{>0}$.

\begin{proposition} \label{prop:recession}
Let $C$ be a closed, regular cone with nonempty interior in a Banach space. Let $f: \inter{C} \rightarrow \inter{C}$ be order-preserving and subhomogeneous. Then the recession map $f_\infty$ exists for every $x \in \inter{C}$, and $f_\infty:\inter{C} \rightarrow C$ is order-preserving and homogeneous.  Furthermore $r(f) = r(f_\infty)$.  
\end{proposition}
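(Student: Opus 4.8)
The plan is to build everything on the observation that for each fixed $x \in \inter{C}$ the family $h(t) := t^{-1}f(tx)$ is monotone decreasing in the cone order as $t$ increases. First I would verify this: for $0 < s \le t$, writing $\lambda = t/s \ge 1$ and applying subhomogeneity to $z = sx$ gives $f(tx) = f(\lambda(sx)) \le \lambda f(sx)$, which rearranges to $t^{-1}f(tx) \le s^{-1}f(sx)$, i.e. $h(t) \le h(s)$. Since $f$ maps into $\inter{C} \subseteq C$, the net $\{h(t) : t \ge 1\}$ is decreasing in $C$ and hence bounded below by $0$. Taking any sequence $t_n \to \infty$, regularity of $C$ forces $h(t_n)$ to converge; monotonicity together with the fact that regular cones are normal then shows the limit is independent of the sequence and coincides with the continuous limit, so $f_\infty(x) = \inf_{t \ge 1} h(t)$ exists. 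Because $C$ is closed, $f_\infty(x) \in C$, so $f_\infty : \inter{C} \to C$ is well defined.

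The two structural properties follow directly. For homogeneity, the substitution $s = \lambda t$ gives $f_\infty(\lambda x) = \lim_{t \to \infty} t^{-1} f(\lambda t x) = \lambda \lim_{s \to \infty} s^{-1} f(sx) = \lambda f_\infty(x)$ for every $\lambda > 0$. For the order-preserving property, if $x \le y$ then $f(tx) \le f(ty)$ for every $t$, and passing to the limit (again using that $C$ is closed) yields $f_\infty(x) \le f_\infty(y)$.

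The substantive claim is $r(f) = r(f_\infty)$. The inequality $r(f_\infty) \le r(f)$ is immediate: since $f_\infty(x) = \inf_t h(t) \le h(1) = f(x)$, and $M(\cdot/x)$ is monotone in its numerator, we get $M(f_\infty(x)/x) \le M(f(x)/x)$, and taking the infimum over $x \in \inter{C}$ gives the bound. For the reverse inequality I would use the scaling identity $M(f(tx)/(tx)) = M(h(t)/x)$, which is immediate from $f(tx) \le \beta (tx) \iff h(t) \le \beta x$. Since $tx \in \inter{C}$, this shows $r(f) \le M(h(t)/x)$ for every $t$ and $x$, and because $M(h(t)/x)$ is decreasing in $t$ we obtain $r(f) \le \lim_{t \to \infty} M(h(t)/x)$; the goal is then to identify this limit with $M(f_\infty(x)/x)$ and take the infimum over $x$.

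The main obstacle is exactly this passage to the limit. Since $h(t) \ge f_\infty(x)$, monotonicity of $M$ only gives $M(h(t)/x) \ge M(f_\infty(x)/x)$, the wrong direction; what is needed is upper semicontinuity of $u \mapsto M(u/x)$ at $u = f_\infty(x)$. I would establish this by exploiting $x \in \inter{C}$: setting $\beta_* = M(f_\infty(x)/x)$, closedness of $C$ gives $\beta_* x - f_\infty(x) \in C$, so for any $\epsilon > 0$ the element $(\beta_* + \epsilon)x - f_\infty(x) = (\beta_* x - f_\infty(x)) + \epsilon x$ is the sum of a point of $C$ and the interior point $\epsilon x$, hence lies in $\inter{C}$. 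Since $\inter{C}$ is open, every $u$ sufficiently close in norm to $f_\infty(x)$ satisfies $(\beta_*+\epsilon)x - u \in C$, i.e. $M(u/x) \le \beta_* + \epsilon$. Applied to $u = h(t) \to f_\infty(x)$ this yields $\limsup_{t} M(h(t)/x) \le M(f_\infty(x)/x)$, so the limit equals $M(f_\infty(x)/x)$ and therefore $r(f) \le M(f_\infty(x)/x)$ for each $x$. Taking the infimum over $x \in \inter{C}$ gives $r(f) \le r(f_\infty)$, which combined with the previous paragraph completes the proof.
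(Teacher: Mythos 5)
Your proposal is correct and follows essentially the same route as the paper: monotonicity of $t^{-1}f(tx)$ plus regularity for existence of $f_\infty$, the substitution argument for homogeneity, the inequality $f_\infty(x) \le f(x)$ for $r(f_\infty) \le r(f)$, and the perturbation-by-$\epsilon x$ into $\inter{C}$ together with norm convergence for the reverse inequality. The only difference is organizational (and to your credit): you prove the pointwise bound $r(f) \le M(f_\infty(x)/x)$ for each fixed $x$ and make explicit the upper-semicontinuity/openness step that the paper's proof, which instead runs over a sequence $x_n$ with $\beta_n \to r(f_\infty)$, leaves implicit when it asserts the existence of the scalars $t_n$.
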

\begin{proof}
Since $f$ is subhomogeneous, $t^{-1}f(tx) \le s^{-1}f(sx)$ whenever $0 < s < t$.  Since $C$ is regular, it follows that $f_{\infty}(x) = \lim_{t \rightarrow \infty} t^{-1} f(tx)$ exists for every $x \in \inter{C}$. For any $\lambda > 0$, can use the substitution $s = t \lambda$ to see that
$$f_\infty(\lambda x) = \lim_{t \rightarrow \infty} \tfrac{1}{t} f(t\lambda x) = \lim_{s \rightarrow \infty} \tfrac{\lambda}{s} f(sx) = \lambda f_\infty(x)$$
so $f_\infty$ is homogeneous. If $x \le y$ in $\inter{C}$, then $t^{-1}f(tx) \le t^{-1}f(ty)$ for all $t > 0$, so we have $f_\infty(x) \le f_\infty(y)$.  

Since $f_\infty(x) \le f(x)$ for all $x \in \inter{C}$, it follows from the definition that $r(f_\infty) \le r(f)$.  Choose a sequence $x_n \in \inter{C}$ such that $f_\infty(x_n) \le \beta_n x_n$ where the each $\beta_n > 0$ and the sequence $\beta_n$ converges to $r(f_\infty)$.  For each $x_n$, there exists $t_n \ge 1$ such that $t_n^{-1} f(t_n x_n) \le (\beta_n + \tfrac{1}{n}) x_n$.  Then since the sequence $\beta_n + \tfrac{1}{n}$ converges to $r(f_\infty)$ it follows that $r(f) = r(f_\infty)$. 
\end{proof}

If $C$ is a normal closed cone with nonempty interior and $f:\inter{C} \rightarrow \inter{C}$ is order-preserving and homogeneous, then the upper Collatz-Wielandt number $r(f)$ is the same as the \emph{partial cone spectral radius} 
$$r_{\inter{C}}(f) = \lim_{k \rightarrow \infty} \|f^k(x)\|^{1/k},$$
where $x \in \inter{C}$ is arbitrary and the choice of $x$ does not affect the value of the limit \cite[Lemma 4.2 and Theorem 4.6]{LLNW18}. Unfortunately, a recession map $f_\infty$ may not send $\inter{C}$ into itself (see \eqref{dagger1} for an example). If $f: \inter{C} \rightarrow C$ is order-preserving and homogeneous, but $f(\inter{C})$ is not contained in $\inter{C}$, then it is still possible to use an iterative method to calculate the upper Collatz-Wielandt number.  The key idea is to combine the iterative formula above with a small perturbation of $f$.  

\begin{proposition} \label{prop:iteration}
Let $C$ be a closed normal cone with nonempty interior in a Banach space. Fix $u \in \inter{C}$ and let $\|\cdot\|_u$ denote the corresponding order-unit norm defined by \eqref{order-unit}.  If $f: \inter{C} \rightarrow C$ is order-preserving and homogeneous, then  
\begin{align*}
r(f) &= \lim_{k \rightarrow \infty} \|(f+ \id)^k(u)\|_u^{1/k} - 1 \\
&= \inf_{k > 0} \|(f+\id)^k(u)\|_u^{1/k} - 1\\  
&= \lim_{k \rightarrow \infty} \|(f+\id)^k(u)\|^{1/k} - 1.
\end{align*}
In particular, $r(f) < 1$ if and only if there is a $k \in \N$ such that $(f+\id)^k(u) \ll 2^k u$.
\end{proposition}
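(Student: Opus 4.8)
The plan is to reduce the entire statement to the auxiliary map $g = f + \id$, to which the partial cone spectral radius formula quoted just above the proposition applies. First I would record that $g$ is order-preserving and homogeneous, and that $g(\inter{C}) \subseteq \inter{C}$, since $\inter{C} + C \subseteq \inter{C}$ for a cone. The key algebraic observation is a unit shift of the Collatz-Wielandt number: for every $x \in \inter{C}$, the inequality $f(x) + x \le \beta x$ is equivalent to $f(x) \le (\beta - 1)x$, so
$$M(g(x)/x) = M\big((f(x)+x)/x\big) = 1 + M(f(x)/x).$$
Taking the infimum over $x \in \inter{C}$ gives $r(g) = 1 + r(f)$. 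Hence it suffices to show that $r(g)$ equals each of $\lim_{k} \|g^k(u)\|_u^{1/k}$, $\inf_{k} \|g^k(u)\|_u^{1/k}$, and $\lim_{k} \|g^k(u)\|^{1/k}$, after which the proposition follows by subtracting $1$.

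For the plain-norm version I would invoke the cited result \cite{LLNW18} for the order-preserving, homogeneous map $g: \inter{C} \to \inter{C}$, which gives $r(g) = \lim_k \|g^k(u)\|^{1/k}$. To pass to the order-unit norm I would first observe that for $x \in C$ one has $\|x\|_u = M(x/u)$, because the lower constraint $-ku \le x$ in the definition of $\|\cdot\|_u$ is automatic when $x \ge 0$. I would then show that $\|\cdot\|$ and $\|\cdot\|_u$ are equivalent on $C$ with constants independent of the point: choosing $\delta > 0$ with the norm ball $B(u,\delta) \subseteq C$ (possible since $u \in \inter{C}$) yields $x \le \delta^{-1}\|x\|\,u$ for every $x \in C$, hence $\|x\|_u \le \delta^{-1}\|x\|$, while the continuous embedding of $(X_u, \|\cdot\|_u)$ into $(X,\|\cdot\|)$ gives $\|x\| \le c\|x\|_u$. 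Since these constants do not depend on $x$, the sequences $\|g^k(u)\|^{1/k}$ and $\|g^k(u)\|_u^{1/k}$ share the same limit, namely $r(g)$.

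The $\inf$ formula then comes from submultiplicativity. Writing $a_k = M(g^k(u)/u) = \|g^k(u)\|_u$, the fact that $g$ is order-preserving and homogeneous gives $g^n(u) \le a_n u$, hence $g^{m+n}(u) = g^m(g^n(u)) \le a_n g^m(u) \le a_m a_n u$, so $a_{m+n} \le a_m a_n$; moreover each $a_k$ is positive and finite because $g^k(u) \in \inter{C} \subseteq X_u$. Fekete's lemma then yields $\lim_k a_k^{1/k} = \inf_k a_k^{1/k}$, completing the three equalities.

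Finally, for the ``in particular'' statement I would chase the equivalences
$$r(f) < 1 \iff r(g) < 2 \iff \inf_k M(g^k(u)/u)^{1/k} < 2 \iff M(g^k(u)/u) < 2^k \text{ for some } k.$$
The remaining point is that $M(g^k(u)/u) < 2^k$ is equivalent to $g^k(u) \ll 2^k u$: if the strict inequality holds then, since $C$ is closed, $g^k(u) \le M(g^k(u)/u)\,u$, so $2^k u - g^k(u) \ge (2^k - M(g^k(u)/u))u$, which lies in $\inter{C}$ because $2^k - M(g^k(u)/u) > 0$ and $u \in \inter{C}$; conversely, if $g^k(u) \ll 2^k u$ then $2^k u - g^k(u) - \epsilon u \in C$ for all small $\epsilon > 0$, giving $g^k(u) \le (2^k - \epsilon) u$ and hence $M(g^k(u)/u) < 2^k$. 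I expect no serious obstacle; the only point needing care is the uniform, point-independent equivalence of $\|\cdot\|$ and $\|\cdot\|_u$ on $C$, which is precisely where $u \in \inter{C}$ and the normality of $C$ enter, and what forces the two $k$-th root limits to agree.
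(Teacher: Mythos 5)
Your proposal is correct, and its backbone coincides with the paper's: both reduce to $g = f+\id$, use the shift identity $r(g) = r(f)+1$, invoke the partial cone spectral radius formula of \cite{LLNW18} for the order-preserving homogeneous map $g:\inter{C}\rightarrow\inter{C}$, and finish the ``in particular'' claim with the same chain of equivalences, including the observation that $\|x\|_u < 2^k$ (equivalently $M(x/u)<2^k$) holds exactly when $x \ll 2^k u$. Where you genuinely diverge is in how the order-unit-norm statements are justified. The paper simply cites the internals of the proof of \cite[Theorem 4.6]{LLNW18}, which already establishes
\begin{equation*}
\lim_{k \rightarrow \infty} \|g^k(u)\|_u^{1/k} = \inf_{k > 0} \|g^k(u)\|_u^{1/k} = \lim_{k \rightarrow \infty} \|g^k(u)\|^{1/k},
\end{equation*}
whereas you prove these equalities from scratch: you identify $\|x\|_u = M(x/u)$ on $C$, obtain a two-sided, point-independent equivalence of $\|\cdot\|$ and $\|\cdot\|_u$ on $C$ (one inequality from $u \in \inter{C}$, the other from normality via the continuous embedding of $(X_u,\|\cdot\|_u)$ into $(X,\|\cdot\|)$ as in \cite[Proposition 19.9]{Deimling}), so the $k$-th roots share a limit, and then get $\lim = \inf$ from submultiplicativity of $a_k = M(g^k(u)/u)$ together with Fekete's lemma. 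Your route is longer but self-contained: it depends only on the \emph{statement} of the cited spectral radius formula rather than on details buried in its proof, and the Fekete argument in particular is a clean, reusable observation. The paper's route is shorter at the price of leaning on the reference. Both arguments are complete and correct; the small verifications you flag (attainment of $M(g^k(u)/u)$ by closedness of $C$, finiteness and positivity of $a_k$ since $g^k(u) \in \inter{C}$) are exactly the right points to check.
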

\begin{proof}
It is clear from the definition of the upper Collatz-Wielandt number that $r(f) + 1 = r(f+\id).$
Since the perturbed map $f+\id$ is order-preserving, homogeneous, and maps $\inter{C}$ into $\inter{C}$, \cite[Lemma 4.2 and Theorem 4.6]{LLNW18} implies that
$$r(f) + 1 = \lim_{k \rightarrow \infty} \|(f + \id)^k(x)\|^{1/k}.$$
for any $x \in \inter{C}$.  Furthermore, in the proof of \cite[Theorem 4.6]{LLNW18} it is shown that
$$\lim_{k \rightarrow \infty} \|(f+ \id)^k(u)\|_u^{1/k} = \inf_{k > 0} \|(f+\id)^k(u)\|_u^{1/k} = \lim_{k \rightarrow \infty} \|(f+\id)^k(u)\|^{1/k}.$$
Therefore $r(f) < 1$ if and only if $\inf_{k > 0} \|(f+\id)^k(u)\|_u^{1/k} < 2$.  This happens if and only if there is a $k > 0$ such that $\|(f+\id)^k(u)\|_u < 2^k$ which is equivalent to $(f+\id)^k(u) \ll 2^k u$. 
\end{proof}

We now consider how to compute the lower Collatz-Wielandt number. Some cones have an order-reversing, bijective $d_T$-isometry $L: \inter{C} \rightarrow \inter{C}$. By \emph{order-reversing}, we mean that $L(x) \le L(y)$ for all $x \ge y \gg 0$. The inverse function on a symmetric cone is an order-reversing $d_T$-isometry, as is the operator $f(x) \mapsto 1/f(x)$ for the cone of positive functions in many function spaces. For finite dimensional cones, Walsh \cite{Walsh18} proved that an order-reversing, bijective $d_T$-isometry exists if and only if $C$ is a symmetric cone.  When there is an order-reversing isometry, we can make the following simple observation which is an immediate corollary of Proposition \ref{prop:recession}.

\begin{lemma} \label{lem:recession2}
Let $C$ be a regular, closed cone with nonempty interior in a Banach space and let $f: \inter{C} \rightarrow \inter{C}$ be order-preserving and subhomogeneous. If there is an order-reversing, bijective $d_T$-isometry $L:\inter{C} \rightarrow \inter{C}$, then 
$$\lambda(f) = r((L f L)_\infty)^{-1}$$ 
where $(LfL)_\infty$ is the recession map of the composition $L f L$.  
\end{lemma}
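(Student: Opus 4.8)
We have a regular closed cone $C$ with nonempty interior, an order-preserving subhomogeneous map $f:\inter{C}\to\inter{C}$, and an order-reversing bijective $d_T$-isometry $L:\inter{C}\to\inter{C}$. We want to show $\lambda(f) = r((LfL)_\infty)^{-1}$. The paper explicitly says this is "an immediate corollary of Proposition \ref{prop:recession}," so the plan is to reduce the claim about the lower Collatz-Wielandt number of $f$ to a claim about the upper Collatz-Wielandt number of a related map, and then invoke the already-proved identity $r(g)=r(g_\infty)$.

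**The strategy.** The conjugated map $g := LfL$ is order-preserving and subhomogeneous: $L$ is order-reversing so the outer and inner $L$ each flip the order, giving an order-preserving composite, and subhomogeneity follows because $L$ is a $d_T$-isometry (hence nonexpansive), and for order-preserving maps on $\inter{C}$ subhomogeneity is equivalent to $d_T$-nonexpansiveness, a fact quoted at the start of Section~\ref{sec:Order}; composing the nonexpansive $f$ with the isometry $L$ on both sides keeps it nonexpansive. Thus $g$ satisfies the hypotheses of Proposition~\ref{prop:recession}, so $g_\infty=(LfL)_\infty$ exists on $\inter{C}$, is order-preserving and homogeneous, and $r(g)=r(g_\infty)$. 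It therefore suffices to prove the single identity
\begin{equation} \label{dagger-key}
\lambda(f) = r(LfL)^{-1}.
\end{equation}

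**Proving the key identity.** First I would record the basic compatibility of $L$ with the order functionals. Since $L$ is an order-reversing $d_T$-isometry, for $x,y\in\inter{C}$ the inequality $\alpha y \le x$ is equivalent to $L(x)\le L(\alpha y)$, and here I must check how $L$ interacts with scalar multiplication. The cleanest route is to observe that $L$ being a $d_T$-isometry means it preserves the quantities $M(\cdot/\cdot)$ in a flipped form: for comparable $x,y$, the order-reversing isometry sends the relation controlling $m(f(x)/x)$ to the relation controlling $M$ of the conjugated map. Concretely, I would show $m(f(x)/x) = M(g(w)/w)^{-1}$ where $w=L(x)$, i.e.\ that a sub-eigenvector inequality $f(x)\ge\alpha x$ transforms under $L$ into a super-eigenvector inequality $LfL(w)\le \alpha^{-1}w$. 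Applying $L$ to $\alpha x \le f(x)$ gives $L(f(x)) \le L(\alpha x)$, and rewriting $x=L(w)$ yields $LfL(w)$ on the left; the scalar $\alpha$ gets inverted because $L$ inverts the scaling factor in $M$ (this is where the isometry property does the work, analogous to the inverse map on a symmetric cone satisfying $M(x/y)=M(L(y)/L(x))$). Taking the supremum over $x\in\inter{C}$ on the left corresponds, via the bijectivity of $L$, to an infimum over $w\in\inter{C}$ on the right, converting $\lambda(f)=\sup_x m(f(x)/x)$ into $\bigl(\inf_w M(g(w)/w)\bigr)^{-1} = r(g)^{-1}$, which is exactly \eqref{dagger-key}.

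**Main obstacle.** The delicate step is pinning down precisely how an order-reversing $d_T$-isometry scales, i.e.\ the lemma that $f(x)\ge\alpha x \iff LfL(L(x)) \le \alpha^{-1}L(x)$, since $L$ is only assumed to be order-reversing and $d_T$-isometric, not homogeneous. I expect this is forced by the isometry condition: the $d_T$-diameter relations $e^{-R}x\le y\le e^R x$ map under $L$ to $e^{-R}L(x)\le L(y) \le e^R L(x)$ because $L$ preserves $d_T$ and reverses order, and feeding in the eigenvector inequalities extracts the reciprocal scaling. Once this scaling lemma is established, everything else is a bookkeeping identity between suprema/infima and the definitions of $\lambda$ and $r$, and the final substitution into $r(g)=r(g_\infty)$ from Proposition~\ref{prop:recession} completes the proof.
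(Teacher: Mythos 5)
The paper offers no proof of this lemma at all --- it is asserted to be ``an immediate corollary of Proposition \ref{prop:recession}'' --- and your overall strategy (show the conjugate is order-preserving and subhomogeneous via nonexpansiveness, invoke $r(g)=r(g_\infty)$ from Proposition \ref{prop:recession}, and reduce everything to transforming sub-eigenvector inequalities into super-eigenvector inequalities) is surely the intended route. However, the two delicate points in your sketch are genuine gaps, not bookkeeping.

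First, the scaling property you flag as the ``main obstacle'' is indeed the crux, and your justification does not yet prove it. The implication ``$e^{-R}x\le y\le e^Rx$ maps to $e^{-R}L(x)\le L(y)\le e^RL(x)$'' is just the statement that $L$ maps closed $d_T$-balls into closed $d_T$-balls, which follows from the isometry property alone and cannot by itself produce the reciprocal scaling $L(\alpha x)=\alpha^{-1}L(x)$. The argument that works uses bijectivity and order-reversal together: a bijective isometry maps $\cl{B_R(x)}=[e^{-R}x,e^Rx]$ \emph{onto} $\cl{B_R(L(x))}=[e^{-R}L(x),e^RL(x)]$; since $L$ is order-reversing, $L(e^Rx)$ is a lower bound of this image interval and belongs to it, hence equals its minimum $e^{-R}L(x)$ (the minimum of an order interval is unique because $C\cap(-C)=\{0\}$). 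This gives $L(\alpha x)=\alpha^{-1}L(x)$ for all $\alpha>0$, and the same onto-interval argument shows that $L(a)\le L(b)$ implies $b\le a$, i.e.\ $L^{-1}$ is also order-reversing --- a fact you implicitly use when converting inequalities back and which is not automatic from the definition of order-reversing.

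Second, your substitution is inconsistent: you first set $w=L(x)$ and later rewrite $x=L(w)$, and these agree only if $L\circ L=\id$. This is not cosmetic. With $x=L(w)$, applying $L$ to $\alpha L(w)\le f(L(w))$ yields $LfL(w)\le \alpha^{-1}L(L(w))$, not $\alpha^{-1}w$, so the chain of equivalences closes up only for involutive $L$. Indeed, for non-involutive $L$ the identity you are trying to prove can fail: on $\R^2_{>0}$ take $f=\id$ and $L(x_1,x_2)=(e/x_2,\,1/x_1)$, which is an order-reversing bijective $d_T$-isometry (inversion composed with a swap and a diagonal scaling); then $LfL=L^2$ is the linear map $(x_1,x_2)\mapsto(ex_1,x_2/e)$, so $r\bigl((LfL)_\infty\bigr)=e$ while $\lambda(f)=1\neq e^{-1}$. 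A correct proof must therefore either add the hypothesis $L\circ L=\id$ (true for the motivating examples, inversion on a symmetric cone and pointwise reciprocal on function cones, and evidently implicit in the paper's statement), or prove the identity in the form $\lambda(f)=r\bigl((L^{-1}fL)_\infty\bigr)^{-1}$: with $x=L(w)$, antihomogeneity and order-reversal of $L^{-1}$ give $\alpha x\le f(x)$ if and only if $L^{-1}fL(w)\le\alpha^{-1}w$, after which taking suprema/infima and applying Proposition \ref{prop:recession} to $L^{-1}fL$ finishes the argument.
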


\section{Analytic maps and uniqueness of fixed points} \label{sec:Analytic}

Let $X, Y$ be real Banach spaces and let $U$ be an open subset of $X$.  A function $f:U \rightarrow Y$ is \emph{real analytic} if for every $x \in U$, there is an $r > 0$ and continuous symmetric $n$-linear forms $A_n: X^n \rightarrow Y$ such that $\sum_{n = 1}^\infty \|A_n\| r^n < \infty$ and 
$$f(x + h) = f(x) + \sum_{n = 1}^\infty A_n(h^n)$$ 
for all $h$ in a neighborhood of $0$ in $X$.    

A \emph{real analytic variety} is a set of common zeros of a finite collection of real analytic functions on an open domain $U \subseteq X$.  The following proposition is based on a theorem of Sullivan about the structure of real analytic varieties.  

\begin{proposition} \label{prop:analytic}
Let $V$ be a nonempty real analytic variety defined in an open subset of a finite dimensional real normed space.  If $V$ is compact and contractible, then $V$ consists of a single point. 
\end{proposition}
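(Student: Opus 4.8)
The plan is to reduce the statement to a combinatorial fact about finite simplicial complexes and then exploit Sullivan's theorem, which asserts that a real analytic variety is an \emph{Euler space}: in any triangulation the link $\operatorname{lk}(x)$ of every point $x$ has even Euler characteristic, $\chi(\operatorname{lk}(x)) \equiv 0 \pmod 2$. First I would invoke the triangulability of (semi)analytic sets (Łojasiewicz) to replace $V$ by a finite simplicial complex $K$, using that $V$ is compact. Sullivan's theorem then says $K$ is a compact Euler complex, while contractibility gives both $\chi(K) = 1$ and $\widetilde{H}_*(K;\mathbb{Z}/2) = 0$.

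The heart of the matter is to show that a positive-dimensional compact Euler complex cannot be contractible, forcing $\dim V = 0$; a compact, connected, zero-dimensional space is a single point, which finishes the proof. The mechanism is already transparent in dimension one, where an Euler complex is exactly a graph in which every vertex has even degree, since the link of a vertex is its set of neighbors and has Euler characteristic equal to the degree. A contractible graph is a tree, and a tree with at least two vertices has a leaf, i.e.\ a vertex of degree one whose link has odd Euler characteristic, contradicting the Euler condition. Thus contractibility forces a ``boundary-like'' point with an odd link, which is precisely what Sullivan's theorem forbids.

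To run this in general I would induct on $n = \dim V$, the case $n=0$ being immediate and $n=1$ the graph computation above. For the inductive step the goal is to locate a single point $p$ whose link has odd Euler characteristic. Here I would use the local homology identity $H_k(V, V \setminus \{p\}; \mathbb{Z}/2) \cong \widetilde{H}_{k-1}(\operatorname{lk}(p); \mathbb{Z}/2)$, which converts an odd link into nontrivial local homology, together with a generic linear PL Morse function $\ell$ on $V \subset \R^N$: as the sublevel sets $V \cap \{\ell \le t\}$ are assembled, each vertex contributes a change governed by its descending link, and the even-link hypothesis constrains the parity of these contributions. The aim is to show that $\mathbb{Z}/2$-acyclicity is incompatible with all contributions being even once $n \ge 1$.

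The main obstacle is exactly this last step. In dimension one the leaf is found by hand, but in dimensions $\ge 2$ a contractible complex need not be collapsible (Bing's house with two rooms has no free face), so one cannot simply peel off a cell to expose an odd link. I expect the resolution to come not from global Euler-characteristic parity, which is useless here since $\mathbb{Z}/2$-acyclicity already forces $\chi = 1$ regardless of the Euler condition, but from a genuinely local, Morse-theoretic analysis: combining the descending-link decomposition of $\ell$ with Sullivan's parity constraint to extract, from the acyclicity of $V$, at least one point at which the link has odd Euler characteristic. Carrying out this parity bookkeeping is the crux; the triangulation, the invocation of Sullivan's theorem, and the zero-dimensional endgame are all routine.
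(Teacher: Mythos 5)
Your proposal assembles the right ingredients (triangulability of the compact analytic set, Sullivan's even-link theorem, and the homological consequences of contractibility), and your dimension-one argument is sound, but the proof is incomplete exactly where you say it is: the claim that a positive-dimensional compact Euler complex cannot be $\mathbb{Z}_2$-acyclic is never established in dimension $\ge 2$. The Morse-theoretic program you sketch faces a real mismatch: Sullivan's condition constrains the Euler characteristic of the \emph{full} link of each point, while the sublevel-set decomposition of a PL Morse function is governed by \emph{descending} links, and you supply no relation between the parities of the two. Moreover, your remark that the local homology identity ``converts an odd link into nontrivial local homology'' is false as stated: $\chi(\operatorname{lk}(p))$ odd means the \emph{reduced} Euler characteristic of the link is even, which neither implies nor is implied by nonvanishing of $\widetilde H_{*}(\operatorname{lk}(p);\mathbb{Z}_2)$. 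Your own base case refutes it: at a leaf of a tree the link is a single point, so its Euler characteristic is odd, yet the local homology there vanishes. So the inductive step is a genuine gap, not just an omitted computation.

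The missing idea --- which is the paper's proof, and is noted in Sullivan's paper itself --- is a global top-dimensional argument rather than a search for one bad point. Let $k=\dim V>0$ and triangulate $V$. Apply the even-link condition at a point $p$ in the interior of a $(k-1)$-simplex $\sigma$: since the complex is $k$-dimensional, $\operatorname{lk}(\sigma)$ is a finite set of $m$ points, the link of $p$ is the join $\partial\sigma * \operatorname{lk}(\sigma)$, and the join formula for Euler characteristics gives $\chi(\operatorname{lk}(p)) = 2-m$ for $k$ even and $\chi(\operatorname{lk}(p)) = m$ for $k$ odd; in either case evenness forces $m$ to be even. Hence every $(k-1)$-simplex lies in an even number of $k$-simplices, i.e.\ the sum of \emph{all} $k$-simplices is a mod-$2$ cycle. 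This chain is nonzero (there is at least one $k$-simplex since $\dim V = k$) and cannot be a boundary because there are no $(k+1)$-simplices. Therefore $H_k(V;\mathbb{Z}_2)\neq 0$, contradicting contractibility. This single observation replaces your entire induction and parity bookkeeping, and it reaches directly the conclusion your Morse-theoretic approach was aiming for by much harder means.
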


\begin{proof}
Let $k = \dim V$ and suppose by way of contradiction that $k > 0$. A theorem of Sullivan \cite[Corollary 2]{Sullivan71} asserts that V is locally homeomorphic to the topological cone over a polyhedron with even Euler characteristic. As an immediate consequence, the sum of all $k$-simplices in a triangularization of $V$ is a mod 2 cycle \cite[see comments after Corollary 2]{Sullivan71}. Since there are no simplices of dimension greater than $k$ in a triangularization of $V$, the sum of all $k$-simplices is not a boundary.  Therefore the homology group $H_k(V, \mathbb{Z}_2) \ne \varnothing$.  This means that $V$ cannot be contractible, a contradiction.
\end{proof}

\begin{theorem} \label{thm:analytic1}
Let $C$ be a closed cone with nonempty interior in a finite dimensional real normed space. Let $f: \inter{C} \rightarrow \inter{C}$ be real analytic and $d_T$-nonexpansive. Then $f$ has a unique fixed point in $\inter{C}$ if and only if $\Fix(f)$ is a nonempty and bounded subset of $(\inter{C},d_T)$. 
\end{theorem}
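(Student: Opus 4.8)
The plan is to handle the two implications separately. The forward direction is immediate: if $f$ has a unique fixed point, then $\Fix(f)$ is a singleton, which is trivially nonempty and bounded in $(\inter{C},d_T)$. All the work lies in the converse, and the strategy there is to verify that $\Fix(f)$ satisfies the three hypotheses of Proposition \ref{prop:analytic} --- that it is a nonempty, compact, contractible real analytic variety --- and then invoke that proposition to conclude $\Fix(f)$ is a single point.

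First I would establish contractibility. Because $C$ sits in a finite dimensional space, every $d_T$-bounded, hence norm-bounded, subset of $\inter{C}$ has compact closure, so $\gamma$ vanishes on all bounded sets and $f$ is vacuously $\gamma$-condensing. Corollary \ref{cor:contractible} then applies and shows $\Fix(f)$ is a nonexpansive retract of $\inter{C}$; let $r:\inter{C}\to\Fix(f)$ be the retraction. Since $\inter{C}$ is convex it is contractible, and a retract of a contractible space is contractible: composing $r$ with a contraction of $\inter{C}$ to a base point $x_0$ yields a contraction of $\Fix(f)$ to the point $r(x_0)\in\Fix(f)$. Hence $\Fix(f)$ is contractible.

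Next I would check that $\Fix(f)$ is a compact real analytic variety. Writing $g(x)=f(x)-x$, which is real analytic on the open set $\inter{C}$, we have $\Fix(f)=g^{-1}(0)$, the common zero set of the finitely many real analytic coordinate functions of $g$; thus $\Fix(f)$ is a real analytic variety. For compactness, the hypothesis that $\Fix(f)$ is $d_T$-bounded gives $\Fix(f)\subseteq \cl{B_R(u)}=[e^{-R}u,e^Ru]$ for some $u\in\inter{C}$ and $R>0$ by \eqref{balls}. Since $e^{-R}u\in\inter{C}$ and $\inter{C}+C\subseteq\inter{C}$, this order interval lies inside $\inter{C}$; and as $C$ is normal (all finite dimensional cones being regular, hence normal) and the ambient space is finite dimensional, the order interval is closed and norm-bounded, hence compact. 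As $\Fix(f)$ is closed in $\inter{C}$ and contained in this compact subset of $\inter{C}$, it is itself compact.

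With $\Fix(f)$ shown to be a nonempty, compact, contractible real analytic variety, Proposition \ref{prop:analytic} forces it to consist of a single point, completing the converse. I expect the only delicate step to be the compactness argument: one must upgrade $d_T$-boundedness to norm-compactness \emph{inside} $\inter{C}$, ruling out fixed points accumulating on $\partial C$, which is precisely what the identity $\cl{B_R(u)}=[e^{-R}u,e^Ru]$ combined with $\inter{C}+C\subseteq\inter{C}$ delivers. The genuinely hard mathematics --- the topological rigidity of real analytic varieties --- is already packaged inside Proposition \ref{prop:analytic}, so the remainder of the proof is assembly of the compactness, contractibility, and analyticity inputs.
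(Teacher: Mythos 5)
Your proposal is correct and follows essentially the same route as the paper's own proof: both verify that $\Fix(f)$ is a nonempty, compact, contractible real analytic variety (contractibility via Corollary \ref{cor:contractible}, compactness from finite dimensionality plus $d_T$-boundedness) and then invoke Proposition \ref{prop:analytic}. Your write-up merely fills in details the paper leaves implicit, namely that $f$ is vacuously $\gamma$-condensing in finite dimensions and that the order interval $[e^{-R}u,e^{R}u]$ is a compact subset of $\inter{C}$.
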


\begin{proof}
We only need to prove that $\Fix(f)$ nonempty and bounded implies that $f$ has a unique fixed point since the converse is obvious. Observe that $\Fix(f)$ is the zero set of the real analytic function $f - \id$, so it is a real analytic variety.  Since $X$ is finite dimensional and we are assuming that $\Fix(f)$ is bounded in $(\inter{C}, d_T)$, it follows that $\Fix(f)$ is compact.  We know by Corollary \ref{cor:contractible} that $\Fix(f)$ is contractible.  Therefore Proposition \ref{prop:analytic} implies that $\Fix(f)$ consists of a single point.   
\end{proof}

We can say more when $f$ is also order-preserving. The proof of the next theorem uses a different technique, which allows us to remove the assumption that the cone is finite dimensional. 

\begin{theorem} \label{thm:analytic2}
Let $C$ be a closed, normal cone with nonempty interior in a Banach space. Let $f: \inter{C} \rightarrow \inter{C}$ be real analytic, order-preserving, and subhomogeneous. Suppose that $C$ is regular or $f$ is $\gamma$ or $\tau$-condensing. Then the following are equivalent.
\begin{enumerate}[(a)]
\item \label{item:Fix2} $\Fix(f)$ is nonempty and bounded in $(\inter{C},d_T)$. 
\item \label{item:illum2} There exist $x, y \in \inter{C}$ such that $f(x) \gg x$ and $f(y) \ll y$. 
\item \label{item:lambdaR2} $\lambda(f) > 1$ and $r(f)< 1$.  
\item \label{item:unique} $f$ has a unique fixed point in $\inter{C}$.  
\item \label{item:attractor} There is a $u \in \inter{C}$ such that $\lim_{k \rightarrow \infty} f^k(x) = u$ for all $x \in \inter{C}$.
\end{enumerate}
\end{theorem}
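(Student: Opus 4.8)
The plan is to treat the equivalence of \ref{item:Fix2}, \ref{item:illum2}, and \ref{item:lambdaR2} as already established, since under the present hypotheses these three are exactly the conclusion of Theorem \ref{thm:bounded}, and then to attach \ref{item:unique} and \ref{item:attractor} by the short cycle \ref{item:Fix2}$\Rightarrow$\ref{item:unique}$\Rightarrow$\ref{item:attractor}$\Rightarrow$\ref{item:Fix2}. Two of these arrows are essentially free. The implication \ref{item:unique}$\Rightarrow$\ref{item:attractor} is the final sentence of Lemma \ref{lem:contraction}: a unique fixed point $u$ makes $\Fix(f)=\{u\}$ bounded, so that lemma applies and every orbit $f^k(x)$ converges to $u$. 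The implication \ref{item:attractor}$\Rightarrow$\ref{item:Fix2} is immediate, since a common limit $u$ of all orbits is a fixed point by continuity of $f$ and absorbs any other fixed point $w$ via $w=f^k(w)\to u$, forcing $\Fix(f)=\{u\}$. Thus the entire theorem reduces to the single implication \ref{item:Fix2}$\Rightarrow$\ref{item:unique}, and this is the only step that consumes real-analyticity; without it a bounded $\Fix(f)$ can certainly contain more than one point.

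For \ref{item:Fix2}$\Rightarrow$\ref{item:unique} I would argue by contradiction, supposing $\Fix(f)$ is nonempty and bounded but contains distinct points $u,v$. Since $d_T(u,v)>0$, after swapping if necessary I may take $\beta:=M(v/u)>1$, and by the attainment in \eqref{functionals} (see \cite[Lemma 2.2]{LLNW18}) I can fix $\phi\in C^*$ with $\phi(v)=\beta\phi(u)$ and $\phi(u)>0$. The key move is to probe $f$ only along the ray through $u$ and collapse to one real variable: set $\rho(t)=\phi(f(tu))$, which is real-analytic in $t>0$. Subhomogeneity makes $t^{-1}f(tu)$ nonincreasing in the cone order, so $\sigma(t):=\rho(t)/t$ is nonincreasing; one computes $\sigma(1)=\phi(u)$, and combining $v=f(v)\le f(\beta u)$ (order-preservation, from $v\le\beta u$) with $f(\beta u)\le\beta u$ (subhomogeneity) pins $\sigma(\beta)=\phi(u)$ as well. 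A nonincreasing function equal at both endpoints is constant on $[1,\beta]$, whence $\rho(t)=t\phi(u)$ there, and real-analyticity upgrades this to $\rho(t)=t\phi(u)$ for \emph{all} $t>0$. In other words, along the ray through $u$ the map $f$ is exactly homogeneous as measured by $\phi$.

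The hard part will be converting this one-directional rigidity into a genuine contradiction with boundedness, precisely where the finite-dimensional route of Theorem \ref{thm:analytic1} (a compact contractible analytic variety collapses by Proposition \ref{prop:analytic}) is unavailable. As partial leverage, iterating from $\beta u$ gives a decreasing sequence $f^k(\beta u)$ (since $f(\beta u)\le\beta u$) bounded below by $v$, so Lemma \ref{lem:regular} produces a fixed point $p$ with $v\le p\le\beta u$, and the squeeze $\beta\phi(u)=\phi(v)\le\phi(f^k(\beta u))\le\phi(\beta u)=\beta\phi(u)$ forces $\phi(p)=\beta\phi(u)$, i.e.\ $\beta u-p\in C\cap\ker\phi\subseteq\partial C$. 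What I really want is to promote ``$\phi(\sigma(t))$ constant'' to ``$\sigma(t)$ constant as a vector,'' equivalently to show $\beta u\in\Fix(f)$; then the whole ray $\{tu:1\le t\le\beta\}$ consists of fixed points, and subhomogeneous scaling spreads fixed points to arbitrarily large $t$, contradicting the $d_T$-boundedness of $\Fix(f)$.

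I expect this upgrade to be the genuine obstacle, since the supporting functional $\phi$ realizing $M(v/u)$ need not be strictly positive, so $\phi$ annihilating a cone element does not make it zero, and a priori the analytic curve $t\mapsto tu-f(tu)$ may sit in a proper face $C\cap\ker\phi$ of the boundary for every $t\ge 1$. My plan to close this is to exploit that $f$ maps $\inter{C}$ into $\inter{C}$ together with the strict relations $f(x)\gg x$, $f(y)\ll y$ supplied by Theorem \ref{thm:bounded}\ref{item:illum2}: the dichotomy is that either the subhomogeneity inequality $f(tu)\le tu$ degenerates to an equality on a set of $t$ with a limit point, whereupon analyticity of $t\mapsto tu-f(tu)$ forces the unbounded ray of fixed points above, or it does not, in which case I would pass to the positive derivative $A=Df(u)$ (which satisfies $A(C)\subseteq C$ and $Au=u$) to rule out a positive-dimensional component of $\Fix(f)$ through $u$ by a local analytic-variety argument that does not rely on the ambient dimension. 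Making this dichotomy precise is where I would concentrate the real effort; once \ref{item:Fix2}$\Rightarrow$\ref{item:unique} is secured the cycle closes and all five conditions are equivalent.
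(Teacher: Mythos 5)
Your overall architecture matches the paper's: delegate \ref{item:Fix2}$\Leftrightarrow$\ref{item:illum2}$\Leftrightarrow$\ref{item:lambdaR2} to Theorem \ref{thm:bounded}, get \ref{item:unique}$\Rightarrow$\ref{item:attractor} from the last sentence of Lemma \ref{lem:contraction}, note \ref{item:attractor}$\Rightarrow$\ref{item:Fix2} is immediate, and concentrate everything in \ref{item:Fix2}$\Rightarrow$\ref{item:unique}. Even your key mechanism there---a ratio $t^{-1}\phi(f(tu))$ that is nonincreasing by subhomogeneity and gets forced constant by real-analyticity---is exactly the paper's. But your proof has a genuine gap precisely where you flag it: you never extract a contradiction from the constancy $\phi(f(tu))=t\phi(u)$, and the repairs you sketch (upgrading $\phi$-constancy to vector constancy, or a dichotomy through $Df(u)$ and a ``local analytic-variety argument'') are not carried out and face a real obstruction: the functional attaining $M(v/u)$ can annihilate a proper face of $C$, and in infinite dimensions Proposition \ref{prop:analytic} is unavailable, so nothing in your setup forces $tu-f(tu)=0$.

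The missing idea is that hypothesis \ref{item:Fix2} gives you more than two fixed points: it activates Lemma \ref{lem:contraction}, which produces $R>r>0$ and $k\in\N$ with $f^k(\cl{B_R(u)})\subset B_r(u)$, hence by \eqref{balls} $f^k(e^Ru)\ll e^ru$ and $f^k(e^{-R}u)\gg e^{-r}u$. This single strict inequality is what the constancy must be played against, and it requires running your ray argument on $f^k$ rather than on $f$ (note $f^k$ is again real analytic, order-preserving, subhomogeneous, and fixes both $u$ and $v$; constancy for $f$ alone says nothing about $f^k$ since $f(tu)$ need not lie on the ray). Concretely: your argument applied to $f^k$ gives $\phi(f^k(tu))=t\phi(u)$ for all $t>0$, which contradicts $\phi(f^k(e^Ru))\le e^r\phi(u)<e^R\phi(u)$ (here $\phi(u)>0$ since $u\in\inter{C}$ and $\phi\ne 0$). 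This is essentially how the paper argues, except it dispenses with the second fixed point and the contradiction framing altogether: it runs the monotonicity-plus-analyticity argument for \emph{every} $\phi\in C^*\setminus\{0\}$, using $f^k(e^Ru)\ll e^ru$ to rule out constancy of each $g(t)=t^{-1}\phi(f^k(tu))$, concluding $f^k(tu)\ll tu$ for all $t>1$ and $f^k(tu)\gg tu$ for all $t<1$; then $f^k$ maps each order interval $[t^{-1}u,tu]$ into its interior, so $d_T(f^k(x),u)<d_T(x,u)$ for $x\ne u$ and $u$ is the unique fixed point of $f^k$, hence of $f$. So your approach is salvageable with one more step, but as written the crucial implication \ref{item:Fix2}$\Rightarrow$\ref{item:unique} is not proved.
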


\begin{proof}
Theorem \ref{thm:bounded} proved the equivalence of \ref{item:Fix2}, \ref{item:illum2}, and \ref{item:lambdaR2}. It is obvious that \ref{item:unique} implies \ref{item:Fix2}. Here we will prove that \ref{item:Fix2} implies \ref{item:unique} and that \ref{item:unique} and \ref{item:attractor} are equivalent.  

\ref{item:Fix2}$\Rightarrow$\ref{item:unique}. 
Choose $u \in \Fix(f)$, and choose $R > r>0$ with $r$ large enough so that $\Fix(f) \subset B_r(u)$.  By Lemma \ref{lem:contraction} there is a $k \in \N$ such that $f^k(\cl{B_R(u)}) \subset B_r(u)$. In particular, by \eqref{balls}, we have $f^k(e^R u) \ll e^r u$ and $f^k(e^{-R} u) \gg e^{-r} u$. 

Since the composition of two real analytic functions is real analytic \cite{Whittlesey65}, it follows that $f^k$ is real analytic. Now, choose $\phi \in C^* \bs \{0\}$, and consider the real-valued function $g(t) = t^{-1} \phi(f^k(tu))$ which is defined on the real interval $(0,\infty)$. Suppose $0 < s < t$. 
By \eqref{functionals}, 
$$ \log \frac{\phi(f^k(tu))}{\phi(f^k(su))} \le d_T(f^k(tu), f^k(su)) \le d_T(tu,su) = \log \left(\frac{t}{s}\right).$$
It follows that $g(t) \le g(s)$, so $g$ is monotone decreasing. If $g(t) = g(1) = \phi(u)$ for any $t > 0$ other than 1, then $g$ will be constant on the interval between $t$ and $1$. Since $g$ is real analytic, that would imply that $g$ is constant on all of $(0, \infty)$. That cannot be the case, however, since $f^k(e^R u) \ll e^r u$, which implies that 
$$g(e^R) \le e^{-R} \phi(e^r u) = e^{r-R} \phi(u) < \phi(u) = g(1).$$  
From this we conclude that $g$ is strictly decreasing and therefore $\phi(f^k(tu)) < t\phi(u)$ for all $t > 1$ and $\phi(f^k(tu)) > t\phi(u)$ for all $t < 1$.  These inequalities are true for all $\phi \in C^* \bs \{0\}$.  

Note that $x \in \inter{C}$ if and only if $\phi(x) > 0$ for all $\phi \in C^* \bs \{0\}$ \cite[Proposition 19.3(b)]{Deimling}. Since $\phi(tu - f^k(tu)) > 0$ for all $\phi \in C^* \bs \{0\}$ and $t > 1$, we have $f^k(tu) \ll tu$ when $t > 1$.  Likewise, $f^k(tu) \gg tu$ when $0 < t < 1$. So for every $t > 1$, $f^k$ maps the closed Thompson metric ball $\cl{B_{\log t}(u)} = [t^{-1} u, t u]$ into its interior. This implies that $d_T(f^k(x),u) < d_T(x,u)$ for all $x \in \inter{C}$. From this, we conclude that $u$ is the only fixed point of $f^k$, and so it must also be the only fixed point of $f$ as well.  

\ref{item:unique}$\Leftrightarrow$\ref{item:attractor}. If $f$ has a unique fixed point $u \in \inter{C}$, then Lemma \ref{lem:contraction} implies that $\lim_{k \rightarrow \infty} f^k(x) = u$ for all $x \in \inter{C}$.  Conversely, if $f^k(x)$ converges to $u \in \inter{C}$ for all $x \in \inter{C}$, then $u$ is a fixed point of $f$ since $f$ is continuous and $u$ is unique since the iterates of all other $x \in \inter{C}$ converge to $u$.
\end{proof}

\begin{remark}
It is known that if $X$ is a real Banach space with the fixed point property and $f: X \rightarrow X$ is real analytic and nonexpansive, then $f$ has a unique fixed point if and only if $\Fix(f)$ is bounded and nonempty \cite[Theorem 5.3]{Lins22}.
\end{remark}


\section{Application to nonlinear matrix equations} \label{sec:matrix}

Let $\mathcal{H} \subset \C^{n \times n}$ denote the set of $n$-by-$n$ Hermitian matrices with complex entries. Let $\mathcal{P}$ denote the cone of positive semidefinite matrices in $\mathcal{H}$. Let $I$ denote the $n$-by-$n$ identity matrix and let $\|\cdot\|$ denote the spectral norm on $\mathcal{H}$. Note that $\inter{\mathcal{P}}$ is the set of all positive definite matrices.  Let $L(X) = X^{-1}$ for any $X \in \inter{\mathcal{P}}$. The following nonlinear function $f: \inter{\mathcal{P}} \rightarrow \mathcal{P}$ was studied in \cite[Section 8]{LaLi12}:
$$f(X) = A + N^* (B + X^{-1})^{-1} N$$
where $A, B \in \mathcal{P}$ and $N \in \C^{n \times n}$. Any fixed point of $f$ is a solution to the discrete algebraic Riccati equation $X = A + N^* (B + X^{-1})^{-1} N$.
Assume that $A + N^*N \in \inter{\mathcal{P}}$ so that $f(\inter{\mathcal{P}}) \subseteq \inter{\mathcal{P}}$. Unlike \cite{LaLi12}, we do not assume that $A$ and $B$ are positive definite, only positive semidefinite which is enough to guarantee that $f$ is order-preserving and subhomogeneous on $\inter{\mathcal{P}}$.  As a consequence of \cite[Theorem 8.1]{LaLi12}, $f$ is a strict $d_T$-contraction on $\inter{\mathcal{P}}$ when $A, B$ are positive definite.  With our weaker assumptions, $f$ may not be a strict $d_T$-contraction. 
In this section, we will demonstrate how to calculate the recession maps $f_\infty$ and $(LfL)_\infty$. Using the recession maps, we will give simple sufficient conditions for $f$ to have a unique positive definite fixed point, despite the fact that $f$ may not be a strict contraction.  

First, we note the following.  

\begin{lemma} \label{lem:schur}
Let $A \in \mathcal{P}$ and let $B(t) \in \mathcal{P}$ for all $t > 0$ and suppose that $\lim_{t \rightarrow \infty} B(t) = B(\infty) \in \mathcal{P}$ exists and $A + B(t) \in \inter{\mathcal{P}}$ for all $t \in (0,\infty]$.  Then 
$$\lim_{t \rightarrow \infty} (tA + B(t))^{-1} = (Q_A B(\infty) Q_A)^\dagger$$
where $Q_A \in \mathcal{H}$ is the orthogonal projection onto the nullspace of $A$ and $M^\dagger$ denotes the Moore-Penrose pseudoinverse of a matrix $M$. 
\end{lemma}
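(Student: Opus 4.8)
The plan is to reduce the statement to a block computation by splitting the space along the orthogonal decomposition $\C^n = \range(A) \oplus \ker A$ (orthogonal because $A$ is Hermitian) and then applying the Schur complement inversion formula while the top-left block is allowed to blow up. First I would write every matrix in the associated $2\times 2$ block form, with $Q_A$ the projection onto $\ker A$ and $P_A = I - Q_A$ the projection onto $\range(A)$. In these coordinates $A$ is block diagonal with invertible positive definite top-left block $A_{11}$ (the restriction of $A$ to its range) and zeros elsewhere, while $B(t) = \left(\begin{smallmatrix} B_{11}(t) & B_{12}(t) \\ B_{21}(t) & B_{22}(t)\end{smallmatrix}\right)$ has $B_{ij}(t) \to B_{ij}(\infty)$ and each diagonal block positive semidefinite. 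Then $M(t) := tA + B(t)$ has top-left block $P(t) = tA_{11} + B_{11}(t) \ge tA_{11}$, which is invertible for every $t > 0$ and satisfies $\|P(t)^{-1}\| \le t^{-1}\|A_{11}^{-1}\| \to 0$.

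Next I would apply the standard Schur complement formula for $M(t)^{-1}$ relative to the invertible block $P(t)$. Since $M(t) \in \inter{\mathcal{P}}$ is invertible, the Schur complement $M(t)/P(t) = B_{22}(t) - B_{21}(t)P(t)^{-1}B_{12}(t)$ is invertible, the bottom-right block of $M(t)^{-1}$ equals $(M(t)/P(t))^{-1}$, the off-diagonal blocks each carry a factor of $P(t)^{-1}$, and the top-left block is $P(t)^{-1}$ plus a term with two factors of $P(t)^{-1}$. Because $P(t)^{-1} \to 0$ while $B_{12}(t), B_{21}(t), B_{22}(t)$ converge, the Schur complement tends to $S := B_{22}(\infty)$, and every block of $M(t)^{-1}$ except the bottom-right one tends to $0$. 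Thus $\lim_{t\to\infty} M(t)^{-1} = \left(\begin{smallmatrix} 0 & 0 \\ 0 & S^{-1}\end{smallmatrix}\right)$, provided $S$ is invertible on $\ker A$.

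The main obstacle, and the only place the endpoint hypothesis at $t = \infty$ is used, is proving that $S = B_{22}(\infty)$ is invertible on $\ker A$. I would argue by contradiction: if $v \in \ker A$ satisfies $B_{22}(\infty) v = 0$, then since $Q_A v = v$ we have $\inner{v, B(\infty)v} = \inner{Q_A v, B(\infty) Q_A v} = \inner{v, Sv} = 0$, and because $B(\infty) \ge 0$ this forces $B(\infty)^{1/2}v = 0$, hence $B(\infty)v = 0$. Combined with $Av = 0$ this gives $(A + B(\infty))v = 0$, contradicting $A + B(\infty) \in \inter{\mathcal{P}}$ unless $v = 0$. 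This is exactly the step where the assumption that $A + B(t) \in \inter{\mathcal{P}}$ holds at the endpoint $t = \infty$ is essential.

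Finally I would identify the computed limit with the claimed pseudoinverse. In the chosen block coordinates $Q_A B(\infty) Q_A = \left(\begin{smallmatrix} 0 & 0 \\ 0 & S\end{smallmatrix}\right)$ with $S$ invertible, and the Moore-Penrose pseudoinverse of such a block-diagonal matrix is precisely $\left(\begin{smallmatrix} 0 & 0 \\ 0 & S^{-1}\end{smallmatrix}\right)$. This coincides with $\lim_{t\to\infty}(tA + B(t))^{-1}$ from the second paragraph, which completes the proof.
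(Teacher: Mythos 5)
Your proposal is correct and takes essentially the same approach as the paper: decompose $\C^n = \range(A) \oplus \ker A$, write $M(t) = tA + B(t)$ in block form, and apply the Schur complement inversion formula with the top-left block $tA_{11} + B_{11}(t)$ blowing up so that its inverse tends to $0$. The only difference is that you explicitly justify the invertibility of $B_{22}(\infty)$ via the endpoint hypothesis $A + B(\infty) \in \inter{\mathcal{P}}$, a step the paper's proof leaves implicit in its ``by inspection'' conclusion.
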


\begin{proof}
Recall the Schur complement formula for the inverse of a partitioned matrix \cite[Section 0.7.3]{HornJohnson}:
\begin{equation} \label{matrix-inverse}
\begin{bmatrix} M_{11} & M_{12} \\ M_{21} & M_{22} \end{bmatrix}^{-1} = \begin{bmatrix} S_1^{-1} & -M_{11}^{-1} M_{12} S_2^{-1} \\ -S_2^{-1} M_{21} M_{11}^{-1} & S_2^{-1} \end{bmatrix}
\end{equation}
where $S_1 = M_{11}-M_{12} M_{22}^{-1} M_{21}$ and $S_2 = M_{22} - M_{21} M_{11}^{-1} M_{12}$. 
Note that all of the inverses in this formula are well-defined when the partitioned matrix is positive definite \cite[Theorem 7.7.6]{HornJohnson}.

We can choose a basis so that $M(t) = tA+B(t)$ can be expressed as a partitioned matrix:
$$M(t) = \begin{bmatrix} 
t A_{11} + B(t)_{11} & B(t)_{12} \\
B(t)_{21} & B(t)_{22}
\end{bmatrix}$$
where 
$$A = \begin{bmatrix}
A_{11} & 0 \\ 
0 & 0 \end{bmatrix}$$
has $A_{11}$ positive definite. Since $M(t)$ is positive definite for all $t > 0$, we can apply \eqref{matrix-inverse} to $M(t)$.  Then by inspection, the limit of $M(t)^{-1}$ as $t \rightarrow \infty$ is 
$$\begin{bmatrix} 
0 & 0 \\ 
0 & B(\infty)_{22}^{-1}
\end{bmatrix} = (Q_A B(\infty) Q_A)^\dagger. $$
\end{proof}

Now, we use Lemma \ref{lem:schur} to compute $f_\infty$ and $(LfL)_\infty$. For any $X \in \inter{\mathcal{P}}$, we have  
\begin{align} \label{dagger1}
f_\infty(X) &= \lim_{t \rightarrow \infty} t^{-1} (A + N^*(B+(tX)^{-1})^{-1}N) \notag \\ 
&= \lim_{t \rightarrow \infty} N^*(tB+X^{-1})^{-1}N \\ 
&= N^*(Q_B X^{-1} Q_B)^\dagger N & (\text{by Lemma }\ref{lem:schur}) \notag
\end{align}
and
\begin{align} \label{dagger2}
(LfL)_\infty(X) &= \lim_{t \rightarrow \infty} t^{-1} (A + N^*(B+tX)^{-1}N)^{-1} \notag \\
&= \lim_{t \rightarrow \infty} (tA + N^*(t^{-1}B+X)^{-1}N)^{-1} \\
&= (Q_A N^*X^{-1} N Q_A)^\dagger & (\text{by Lemma }\ref{lem:schur}) \notag 
\end{align}
where $Q_A$ and $Q_B$ are the orthogonal projections onto the nullspaces of $A$ and $B$, respectively.


\begin{proposition}
Let $f(X) = A + N^*(B + X^{-1})^{-1}N$ where $A, B \in \mathcal{P}$ and $N \in \C^{n \times n}$ satisfies $A+ N^*N \in \inter{\mathcal{P}}$. Let $Q_A$ and $Q_B$ denote the orthogonal projections onto the nullspaces of $A$ and $B$ respectively, and let $f_\infty$ and $(LfL)_\infty$ be given by equations \eqref{dagger1} and \eqref{dagger2}. Then $f$ has a (necessarily unique) globally attracting fixed point in $\inter{\mathcal{P}}$ if and only if there are constants $k, \ell > 0$ such that 
$$\|(f_\infty + \id)^k(I)\| < 2^k  \text{ and }\|((LfL)_\infty+\id)^\ell(I)\| < 2^\ell.$$ 
In particular, $f$ has a globally attracting fixed point if 
$$\|N^* Q_B N \| < 1 \text{ and } \| (Q_A N^*N Q_A)^\dagger \| < 1.$$
\end{proposition}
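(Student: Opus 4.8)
The plan is to recognize this proposition as a direct application of Theorem \ref{thm:analytic2}, with the bulk of the work being the translation of condition \ref{item:lambdaR2} ($\lambda(f) > 1$ and $r(f) < 1$) into the stated matrix inequalities via the recession-map machinery of Section \ref{sec:Order}. First I would verify the hypotheses of Theorem \ref{thm:analytic2}: the cone $\mathcal{P}$ lies in the finite dimensional space $\mathcal{H}$, so it is regular; the map $f$ is order-preserving and subhomogeneous (as already noted in this section); and $f$ is real analytic, being built from the real analytic map $X \mapsto X^{-1}$ on $\inter{\mathcal{P}}$ together with sums and congruences. Hence Theorem \ref{thm:analytic2} applies, and $f$ has a globally attracting fixed point in $\inter{\mathcal{P}}$ (condition \ref{item:attractor}, which is equivalent to uniqueness \ref{item:unique}, justifying the ``necessarily unique'' parenthetical) if and only if $\lambda(f) > 1$ and $r(f) < 1$.

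It then remains to rewrite each of these two inequalities. For $r(f) < 1$ I would invoke Proposition \ref{prop:recession} to get $r(f) = r(f_\infty)$, where $f_\infty: \inter{\mathcal{P}} \to \mathcal{P}$ is order-preserving and homogeneous, and then apply Proposition \ref{prop:iteration} with order unit $u = I$: this gives $r(f_\infty) < 1$ if and only if $(f_\infty + \id)^k(I) \ll 2^k I$ for some $k$. The point to check is that the order-unit norm $\|\cdot\|_I$ on Hermitian matrices coincides with the spectral norm $\|\cdot\|$, and that for a positive semidefinite matrix $M$ the relation $M \ll 2^k I$ is exactly $\lambda_{\max}(M) < 2^k$, i.e. $\|M\| < 2^k$. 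Thus $r(f) < 1$ iff $\|(f_\infty + \id)^k(I)\| < 2^k$ for some $k$.

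For $\lambda(f) > 1$ I would use that $\inter{\mathcal{P}}$ is a symmetric cone, so $L(X) = X^{-1}$ is an order-reversing, bijective $d_T$-isometry and Lemma \ref{lem:recession2} gives $\lambda(f) = r((LfL)_\infty)^{-1}$. Hence $\lambda(f) > 1$ iff $r((LfL)_\infty) < 1$, and the same argument as above (Proposition \ref{prop:iteration} applied to the order-preserving homogeneous map $(LfL)_\infty$, whose existence is furnished by applying Proposition \ref{prop:recession} to $LfL$) shows this holds iff $\|((LfL)_\infty + \id)^\ell(I)\| < 2^\ell$ for some $\ell$. Combining the two characterizations yields the main equivalence.

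Finally, for the sufficient condition I would specialize to $k = \ell = 1$. Using \eqref{dagger1} and the idempotency of the orthogonal projection, $Q_B^\dagger = Q_B$, one computes $(f_\infty + \id)(I) = N^* Q_B N + I$, and \eqref{dagger2} gives $((LfL)_\infty + \id)(I) = (Q_A N^* N Q_A)^\dagger + I$. Since $N^* Q_B N$ and $(Q_A N^* N Q_A)^\dagger$ are both positive semidefinite, adding $I$ shifts the spectrum by $1$, so $\|M + I\| = \|M\| + 1$; the inequalities $\|(f_\infty+\id)(I)\| < 2$ and $\|((LfL)_\infty+\id)(I)\| < 2$ therefore reduce to $\|N^* Q_B N\| < 1$ and $\|(Q_A N^* N Q_A)^\dagger\| < 1$. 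I expect the only delicate points to be the identification of the order-unit norm with the spectral norm and the spectral-shift identity $\|M + I\| = \|M\| + 1$ for positive semidefinite $M$; the rest is bookkeeping through the cited results, since the substantive content lives in Theorem \ref{thm:analytic2} and Propositions \ref{prop:recession} and \ref{prop:iteration}.
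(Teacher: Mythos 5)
Your proposal is correct and follows essentially the same route as the paper: apply Theorem \ref{thm:analytic2} (via regularity of the finite dimensional cone $\mathcal{P}$, order-preservation, subhomogeneity, and real analyticity of $f$), translate $r(f)<1$ and $\lambda(f)>1$ through Propositions \ref{prop:recession} and \ref{prop:iteration} and Lemma \ref{lem:recession2} with $u=I$, using that $M \ll cI$ iff $\|M\| < c$ for positive semidefinite $M$, and then specialize to $k=\ell=1$ for the sufficient condition. Your additional details (the identification $\|\cdot\|_I = \|\cdot\|$, the spectral shift $\|M+I\|=\|M\|+1$, and the explicit computation of $(f_\infty+\id)(I)$ and $((LfL)_\infty+\id)(I)$ via $Q_B^\dagger = Q_B$) are all accurate and simply make explicit what the paper leaves as bookkeeping.
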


\begin{proof}
Observe that $f$ is real analytic because the inverse operator $L$ is real analytic in a neighborhood of any $X \in \inter{\mathcal{P}}$. Since $A, B \in \mathcal{P}$ and $A+N^*N \in \inter{\mathcal{P}}$, it follows that $f$ is an order-preserving, subhomogeneous map sending $\inter{\mathcal{P}}$ into itself. When $X \in \mathcal{P}$, note that $X \ll I$ if and only if $\|X \| < 1$. In particular, $\|(f_\infty + \id)^k(I)\| \le 2^k$ if and only if $(f_\infty + \id)^k(I) < 2^k I$. By Propositions \ref{prop:recession} and \ref{prop:iteration}, there is a $k > 0$ for which these equations hold if and only if $r(f) < 1$. Similarly, $\|((LfL)_\infty + \id)^\ell(I)\| \le 2^\ell$ for some $\ell > 0$ if and only if $r((LfL)_\infty) < 1$. This is equivalent to $\lambda(f) > 1$ by Lemma \ref{lem:recession2}.  Therefore Theorem \ref{thm:analytic2} says that $f$ has a globally attracting fixed point in $\inter{\mathcal{P}}$ if and only if
$$\|(f_\infty + \id)^k(I)\| < 2^k  \text{ and }\|((LfL)_\infty+\id)^\ell(I)\| < 2^\ell$$ 
for some $k, \ell \in \N$.  This condition is satisfied with $k = \ell = 1$ if 
$$\|N^* Q_B N \| < 1 \text{ and } \| (Q_A N^*N Q_A)^\dagger \| < 1.$$
\end{proof}


\bibliography{DW3}
\bibliographystyle{plain}

\end{document}